\documentclass[sn-mathphys-num]{sn-jnl}

\usepackage{enumitem}
\usepackage{tabularx}
\usepackage{booktabs}
\usepackage{graphicx}%
\usepackage{multirow}%
\usepackage{amsmath,amssymb,amsfonts}%
\usepackage{amsthm}%
\usepackage{mathrsfs}%
\usepackage[title]{appendix}%
\usepackage{xcolor}%
\usepackage{textcomp}%
\usepackage{manyfoot}%
\usepackage{booktabs}%
\usepackage{algorithm}%
\usepackage{algorithmicx}%
\usepackage{algpseudocode}%
\usepackage{listings}%
\usepackage{listings}%
\usepackage{supertabular}%
\usepackage{setspace}
\usepackage{caption}%
\usepackage{cleveref}%
\usepackage{comment}%
\usepackage{float}%
\usepackage{indentfirst}%
\usepackage{extarrows}%
\usepackage{subfigure}%
\usepackage{longtable}%
\usepackage{bm}%
\usepackage{pgfplots}%
\pgfplotsset{compat=1.18}%
\usepackage{hyperref}%
\usepackage{array}%
\usepackage{multirow}%
\newtheorem{corollary}{Corollary}%

\theoremstyle{thmstyleone}%
\newtheorem{theorem}{Theorem}
\newtheorem{proposition}[theorem]{Proposition}%

\theoremstyle{thmstyletwo}%
\newtheorem{example}{Example}%
\newtheorem{remark}{Remark}%

\theoremstyle{thmstylethree}%

\begin{document}


\title[Article Title]{Dynamic-Threshold Algorithms for the Continuous Quadratic Knapsack Problem: Reset Mechanisms and Complexity}


\author[1,2]{\fnm{Yong-Jin} \sur{Liu}}\email{yjliu@fzu.edu.cn}

\author[2]{\fnm{Peicheng} \sur{Xie}}\email{240320012@fzu.edu.cn}

\author*[2]{\fnm{Chuan} \sur{Yang}}\email{chuanyang@fzu.edu.cn}

\affil[1]{\orgdiv{Center for Applied Mathematics of Fujian Province, School of Mathematics and Statistics}, \orgname{Fuzhou University}, \orgaddress{\street{No. 2 Wulongjiang North Avenue}, \city{Fuzhou}, \postcode{350108}, \state{Fujian}, \country{China}}}
\affil[2]{\orgdiv{School of Mathematics and Statistics}, \orgname{Fuzhou University}, \orgaddress{\street{No. 2 Wulongjiang North Avenue}, \city{Fuzhou}, \postcode{350108}, \state{Fujian}, \country{China}}}


\abstract{
Condat’s algorithm is an efficient dynamic-threshold method for projection onto the simplex, but its extension to weighted equality constraints and the algorithmic roles of resetting and removal have received limited analysis. We develop a dynamic-threshold algorithm (DTA) for a continuous quadratic knapsack problem with a weighted equality constraint. 
DTA maintains a threshold invariant through three operations—addition, reset, and removal—and we establish its finite termination and correctness. A sufficient condition under which reset cannot occur motivates a simpler no-reset variant, NDTA. We construct instances for which DTA runs in \(\Theta(n)\) time whereas NDTA requires $\Theta(n^2)$ time, although both algorithms have quadratic worst-case complexity. We further show that, when the weight ratio and the number of deletions per removal pass are bounded, a linear number of passes with positive threshold increments requires the minimum nonzero gap between input values, normalized by the data range, to be at most \(\exp[-\Theta(n\log n)]\). Numerical experiments with up to \(10^7\) variables demonstrate that DTA and NDTA achieve approximately linear empirical scaling, and outperform Secant, WMVA, Variable Fixing, Newton, Median Search, Heap, and Sort in running time.

}

\keywords{Quadratic knapsack problem $\cdot$ Dynamic-threshold algorithms $\cdot$  Worst-case complexity $\cdot$ Minimum variance allocation $\cdot$ Sensor placement}



\maketitle

\section{Introduction}\label{sec.1}

In this paper, we consider the following singly constrained quadratic program:
\begin{align*}\label{P}
    \min_{x \in \mathbb{R}^n} \quad & \sum_{i=1}^{n} w_i x_i^2 \\
    \text{s.t.} \quad & \sum_{i=1}^{n} w_i x_i = c, \tag{P}\\
    & x \leq b,
\end{align*}
where \(w \in \mathbb{R}^n_{++}\), \(b \in \mathbb{R}^n\), and \(c \in \mathbb{R}\) satisfy \(c < \sum_{i=1}^n w_i b_i\). Problem~\eqref{P} is a continuous quadratic knapsack problem, equivalently a singly constrained quadratic program (see~\cite{5,7,8,20}).

\subsection{Applications}

Problem~\eqref{P} arises in several settings. We mention two representative examples.

\textbf{Zero-lower-bound weighted minimum variance allocation (ZWMVA) problem.}
The weighted minimum variance allocation (WMVA) problem was introduced by Sun and Sheu~\cite{17}. Its zero-lower-bound variant arises in applications where negative allocations are excluded, such as portfolio optimization~\cite{Pang80,Bretthauer02b,15,19}, single-link projection problem~\cite{28}, and single-quadratic data fitting~\cite{Kamesam09}. The model is formulated as follows:
\begin{align*}\label{ZWMVA}
    \min_{x \in \mathbb{R}^n} \quad & \sum_{i=1}^n w_i \Bigl( x_i - \frac{c}{W_{sum}} \Bigr)^2 \\
    \text{s.t.} \quad & \sum_{i=1}^{n} w_i x_i = c, \tag{ZWMVA}\\
    & 0 \leq x \leq b,
\end{align*}
where \(w,b \in \mathbb{R}^n_{++}\), \(c>0\), and \(W_{sum}=\sum_{i=1}^n w_i\). Since \(\sum_{i=1}^n w_i \left(x_i-\frac{c}{W_{sum}}\right)^2
=
\sum_{i=1}^n w_i x_i^2-\frac{c^2}{W_{sum}}\), problem~\eqref{ZWMVA} differs from~\eqref{P} only by the constraint \(x\geq 0\). Theorem~\ref{thm:P-ZWMVA} shows that these two problems have the same set of optimal solutions.

\textbf{Continuous-relaxation of the sensor placement (CSP) problem.}
The sensor placement problem was introduced by Agnetis et al.~\cite{Agnetis12} and was further studied by Frangioni et al.~\cite{Frangioni11}. It takes the following form:
\begin{align*}\label{SP}
    \min \quad & \sum_{i=1}^n p_i z_i^2 + \sum_{i=1}^n q_i y_i, \\
    \text{s.t.} \quad & \sum_{i=1}^n z_i = 1, \tag{SP} \\
    & 0 \leq z_i \leq y_i,\ y_i \in \{0,1\},\ i=1,\ldots,n,
\end{align*}
where \(p \in \mathbb{R}^n_{++}\), \(q \in \mathbb{R}^n\), and for $i=1,\ldots,n$, \(z_i\) denotes the fraction of demand assigned to the sensor \(i\), and \(y_i\) indicates whether the sensor \(i\) is active. In continuous relaxation, the integrality constraint on \(y\) is removed. As noted in~\cite[Section~6.3]{19}, we may assume \(q_i>0\) for all $i=1,\ldots,n$; if not, then we can fix \(y_i=1\) and eliminate \(y_i\) from the problem. Therefore, the continuous-relaxation of the problem~\eqref{SP} has the following form:
\begin{align*}\label{CSP}
    \min \quad & \sum_{i=1}^n p_i z_i^2 + \sum_{i=1}^n q_i z_i, \\
    \text{s.t.} \quad & \sum_{i=1}^n z_i = 1, \tag{CSP}\\
    & z_i \geq 0.
\end{align*}
Theorem~\ref{theorem*:equivalence} shows that problem~\eqref{CSP} is equivalent to problem~\eqref{P}.

\subsection{Related Work}

Problem~\eqref{P} belongs to the classical family of continuous quadratic knapsack problems. Helgason et al.~\cite{5} gave an \(\Theta(n\log n)\) algorithm based on sorting the breakpoints. Brucker~\cite{6} and, independently, Calamai and Mor\'{e}~\cite{7} reduced the complexity to \(\Theta(n)\) by a binary search on the dual multiplier. Pardalos and Kovoor~\cite{8} further improved this approach by using median selection to fix a larger fraction of variables in each iteration. These works established the threshold structure of the solution and the associated strategy of shrinking an active set induced by the breakpoints of the piecewise-linear constraint function.

Several later methods can be regarded as refinements of this threshold search perspective. Dai and Fletcher~\cite{9} proposed a secant-type scheme with bracketing and refinement, avoiding derivative information. Kiwiel~\cite{11,12} proposed both an \(\Theta(n)\) breakpoint search method and a variable-fixing scheme based on a relaxed subproblem, and the latter has worst-case complexity \(\Theta(n^2)\). Cominetti et al.~\cite{10} reformulated the threshold equation as a one-dimensional semismooth equation and derived a Newton-type method with very fast local convergence.

A related direction arises from the WMVA problem introduced by Sun and Sheu~\cite{17}. They showed that the WMVA problem is essentially equivalent to a continuous quadratic knapsack problem and characterized its solution through the uniform distribution property. Their algorithm is effective, but still requires a full scan of the unfixed variables at each iteration, resulting in worst-case complexity \(\Theta(n^2)\); moreover, the argument excluding this worst case relies on an integrality assumption on the data.



Another relevant line of work is projection onto the simplex. Michelot~\cite{21} developed a finite algorithm for projecting a point onto the canonical simplex. The method successively localizes the solution, and at each iteration, it reduces the problem to an affine subspace through explicit and simple computations. Building on this viewpoint, Condat~\cite{18} proposed a Gauss--Seidel-like refinement in which the threshold is updated as elements are read, rather than only after a full pass. The algorithm maintains an active set of elements governed by the current threshold, while the remaining elements are fixed at their bounds. The threshold is nondecreasing throughout the procedure. When a newly read element yields a substantially better lower bound on the threshold, a reset step replaces the current active set by this element and defers the previous active set for later reconsideration. The deferred elements are then scanned once and may be added back to the active set. The algorithm finally enters a removal phase, where elements violating the current threshold are deleted iteratively, with the threshold updated after each deletion. This removal phase can yield the \(\Theta(n^2)\) worst-case complexity on certain instances, although the entire algorithm is typically much faster in practice. These facts make Condat's framework a natural starting point for our study and lead to two questions:
\begin{itemize}
    \item What structural conditions are needed for the removal step to reach the \(\Theta(n^2)\) worst case?
    \item How does the reset step affect the complexity and robustness of the algorithm?
\end{itemize}

\subsection{Our Contributions}

The main contributions of this paper are summarized as follows.

\begin{itemize}
    \item[(i)] Motivated by the dynamic-threshold framework underlying Condat's algorithm for projection onto the simplex, we develop a Dynamic-Threshold Algorithm (DTA) for problem~\eqref{P}. The proposed method applies to a class of continuous quadratic knapsack problems with a weighted equality constraint, including the ZWMVA problem and the CSP problem as special cases. We establish the correctness of DTA and study its complexity.

    \item[(ii)] We identify three basic operations governing the dynamics of DTA---addition, reset, and removal---and analyze them in detail. For the removal operation, we derive a necessary condition for the most unfavorable pattern, namely, the regime in which the active set decreases by one element per round over many consecutive rounds. For the reset operation, we first give a sufficient condition under which no reset occurs and introduce the corresponding no-reset variant, NDTA. We then show that the reset step can produce a strict complexity separation: DTA runs in \(\Theta(n)\) time on some instances for which NDTA requires \(\Theta(n^2)\), although both variants require \(\Theta(n^2)\) time on some other inputs. We further show that, when the weight ratio and the number of deletions per iteration are bounded, sustaining a linear number of removal iterations with strictly positive threshold increments forces the minimum nonzero gap among the input values to be super-exponentially small relative to their overall range. Such scale separation makes the quadratic pattern difficult to observe numerically.


    \item[(iii)] We perform extensive numerical experiments on large-scale instances with dimensions up to \(10^7\) for the two application problems~\eqref{ZWMVA} and~\eqref{CSP}. The results show a near-linear empirical scaling for both DTA and NDTA. Their running times are very close, with NDTA often being slightly faster. Both algorithms also outperform the tested algorithms including Secant~\cite{9}, WMVA~\cite{17}, Variable Fixing~\cite{12}, Newton~\cite{10}, Median Search~\cite{11}, Heap~\cite{33}, and Sort~\cite{32} in terms of running time.
\end{itemize}
\subsection{Organization of the Paper}
The remainder of the paper is organized as follows.
Section~\ref{sec.2} presents basic properties of problem~\eqref{P} and relates it to weighted projection and allocation models.
Section~\ref{sec.3} introduces DTA, describes its main operations, and proves finite termination and correctness.
Section~\ref{sec.4} studies the reset and removal mechanisms. It introduces the no-reset variant NDTA, compares the time complexity of DTA and NDTA, gives worst-case constructions, and presents instances that separate the two variants.
Section~\ref{sec.5} reports numerical results for \eqref{ZWMVA} and \eqref{CSP}.
Section~\ref{sec.6} concludes the paper and discusses possible directions for future work.


	\section{Basic Properties of Problem~\eqref{P}}\label{sec.2}

The optimality conditions for problem~\eqref{P} are summarized below.
    
    

%

\begin{theorem}\label{theorem 2.1}
Assume that problem~\eqref{P} is feasible. For any \(\tau\in\mathbb{R}\), define
\begin{equation}\label{eq:xgtau}
x(\tau)=\min\{\tau,b\},
\quad
g(\tau)=\sum_{i=1}^n w_i x_i(\tau),
\end{equation}
where the minimum is taken componentwise. Then \(x^*\in\mathbb{R}^n\) is an optimal solution to problem~\eqref{P} if and only if there exists \(\tau^*\in\mathbb{R}\) such that
\[
g(\tau^*)=c
\quad \text{and} \quad
x^*=x(\tau^*).
\]
Moreover, \(g\) is a nondecreasing piecewise linear function, and \(\tau^*\) is unique.
\end{theorem}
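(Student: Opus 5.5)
The proof will run through the KKT conditions, which are necessary and sufficient here. Problem~\eqref{P} has a strictly convex objective (since \(w\in\mathbb{R}^n_{++}\)) and only affine constraints. Hence linear constraint qualification holds, and \(x^*\) is optimal if and only if there exist \(\lambda\in\mathbb{R}\) and \(\mu\in\mathbb{R}^n_+\) with
\[
2w_ix_i^*-\lambda w_i+\mu_i=0,\qquad \mu_i(x_i^*-b_i)=0,\qquad x^*\le b,\qquad \sum_i w_ix_i^*=c .
\]
I will set \(\tau=\lambda/2\). Dividing the stationarity equation by \(w_i>0\) gives \(x_i^*=\tau-\mu_i/(2w_i)\). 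Strict convexity, together with feasibility and the fact that the objective is coercive on the closed feasible set, also gives existence and uniqueness of the optimal solution. That will be used later for uniqueness of \(\tau^*\).

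\emph{Necessity.} Take \(x^*\) optimal with multipliers as above, and fix an index \(i\). If \(\mu_i>0\), then complementarity forces \(x_i^*=b_i\), and \(x_i^*=\tau-\mu_i/(2w_i)<\tau\). If \(\mu_i=0\), then \(x_i^*=\tau\), and feasibility gives \(\tau\le b_i\). In both cases \(x_i^*=\min\{\tau,b_i\}\), so \(x^*=x(\tau)\). The equality constraint then reads \(g(\tau)=c\).

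\emph{Sufficiency.} Suppose \(g(\tau^*)=c\) and \(x^*=x(\tau^*)\). Define \(\mu_i=2w_i(\tau^*-x_i^*)\), which is \(\ge0\) because \(x_i^*\le\tau^*\). It vanishes unless \(x_i^*=b_i<\tau^*\), so complementarity holds. Feasibility holds by construction, and stationarity holds with \(\lambda=2\tau^*\). The KKT conditions are therefore satisfied, and \(x^*\) is optimal.

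For the shape of \(g\): each map \(\tau\mapsto w_i\min\{\tau,b_i\}\) is piecewise linear and nondecreasing, with slope \(w_i\) for \(\tau<b_i\) and slope \(0\) for \(\tau>b_i\). A finite sum of such maps is again piecewise linear and nondecreasing.

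The main obstacle is the uniqueness of \(\tau^*\), because \(g\) is constant on \([\max_i b_i,\infty)\). There it equals \(\sum_i w_ib_i>c\), by the standing assumption. So any solution of \(g(\tau)=c\) satisfies \(\tau<\max_i b_i=:b_{\max}\). On \((-\infty,b_{\max})\) at least one index \(i\) with \(b_i=b_{\max}\) contributes slope \(w_i>0\), so \(g\) is strictly increasing there, and the root is unique.

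It remains to check existence of a root, even though the statement only asserts uniqueness. As \(\tau\to-\infty\) we have \(g(\tau)\to-\infty\), and \(g\) is continuous, so the intermediate value theorem gives a root. Finally, uniqueness of \(\tau^*\) is consistent with uniqueness of \(x^*\): distinct \(\tau<b_{\max}\) give distinct \(x(\tau)\).
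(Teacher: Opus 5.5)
The paper gives no proof of this theorem. It is stated in Section~2 as a summary of the standard optimality conditions for the continuous quadratic knapsack problem, so there is no paper argument to compare your route against.

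Your KKT derivation is correct and complete. Each step holds:
\begin{itemize}
\item KKT conditions are necessary and sufficient for this convex program with affine constraints.
\item The case split on \(\mu_i\) correctly yields \(x^*=x(\tau)\).
\item The explicit multipliers \(\mu_i=2w_i(\tau^*-x_i^*)\) establish the converse.
\item Strict increase of \(g\) on \((-\infty,\max_i b_i)\) gives uniqueness of \(\tau^*\).
\end{itemize}

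You also correctly made uniqueness of \(\tau^*\) rest on the standing assumption \(c<\sum_{i=1}^n w_ib_i\) rather than on the theorem's weaker hypothesis of feasibility. This distinction matters. If \(c=\sum_{i=1}^n w_ib_i\), the problem is feasible, with \(x=b\) the only feasible point. Yet \(g(\tau)=c\) for every \(\tau\ge\max_i b_i\), so \(\tau^*\) would not be unique.

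Two parts of your write-up are unnecessary and could be dropped. The existence and uniqueness of \(x^*\) via coercivity is announced as an ingredient for uniqueness of \(\tau^*\), but you never use it; the monotonicity argument already suffices. The closing consistency remark is likewise not needed.
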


%



Theorem~\ref{theorem 2.1} reduces problem~\eqref{P} to the identification of a scalar threshold \(\tau\in\mathbb{R}\) such that \(g(\tau)=c\). Hence, most iterative methods proceed by evaluating \(g(\tau)\) and checking it against \(c\). On the other hand, problem~\eqref{P} also admits a weighted projection reformulation, which connects it to the Condat-type framework.

\medskip

\begin{theorem}\label{theorem*:equivalence}
Let \(\mathcal N=\{1,\ldots,n\}\). Under the change of variables
\begin{equation}\label{eq:zi_xi1}
\overline z_i=w_i(b_i-x_i),
\quad i\in\mathcal N,
\end{equation}
problem~\eqref{P} is equivalent to
\begin{align*}
\min_{\overline z\in\mathbb R^n}\quad
&\frac12\sum_{i=1}^n
\overline w_i(\overline z_i-\overline y_i)^2\\
\text{s.t.}\quad
&\sum_{i=1}^n\overline z_i=a,
\\
&\overline z\ge0,
\end{align*}
where
\begin{equation}\label{eq:variables0}
\overline w_i=\frac1{w_i},
\quad
\overline y_i=w_i b_i,
\quad
a=\sum_{i=1}^n w_i b_i-c>0.
\end{equation}
The transformation is a bijection between the feasible sets and
preserves optimality.

Moreover, suppose \(p_i>0\) for all \(i\in\mathcal N\). Then
problem~\eqref{CSP} is equivalent to problem~\eqref{P} with
\begin{equation}\label{eq:csp-to-p}
w_i=\frac1{2p_i},
\quad
b_i=-q_i,
\quad
c=-1-\sum_{i=1}^n\frac{q_i}{2p_i},
\end{equation}
under the change of variables
\begin{equation}\label{eq:csp-change}
x_i=-q_i-2p_i z_i,
\quad i\in\mathcal N.
\end{equation}
\end{theorem}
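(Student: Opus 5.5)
The plan is to verify both equivalences by direct substitution. Each change of variables is an affine bijection of \(\mathbb R^n\), and we check that it maps the objective of one problem to a positive multiple of the other's objective plus a constant.

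\textbf{First part.} The map \(x\mapsto \overline z\) with \(\overline z_i=w_i(b_i-x_i)\) is an invertible affine map, since \(w_i>0\); its inverse is \(x_i=b_i-\overline z_i/w_i\).

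Feasibility transfers as follows. The constraint \(x\le b\) is equivalent to \(\overline z\ge0\), again because \(w_i>0\). Summing gives
\[
\sum_i\overline z_i=\sum_i w_ib_i-\sum_i w_ix_i,
\]
so \(\sum_i w_ix_i=c\) holds if and only if \(\sum_i\overline z_i=a\). The assumption \(c<\sum_i w_ib_i\) is exactly \(a>0\).

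For the objective, write \(\overline z_i-\overline y_i=-w_ix_i\). Then
\[
\tfrac12\sum_i\overline w_i(\overline z_i-\overline y_i)^2=\tfrac12\sum_i\frac{1}{w_i}w_i^2x_i^2=\tfrac12\sum_i w_ix_i^2 .
\]
The two objectives therefore differ only by the positive factor \(\tfrac12\), with no additive constant at all. A bijection between the feasible sets that preserves objective values up to a positive scaling preserves the set of minimizers, which proves the first claim.

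\textbf{Second part.} The map \(z\mapsto x\) with \(x_i=-q_i-2p_iz_i\) is an invertible affine map, since \(p_i>0\); its inverse is \(z_i=-(x_i+q_i)/(2p_i)\). Set \(w_i=1/(2p_i)\) and \(b_i=-q_i\).

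Feasibility transfers as follows. The constraint \(z_i\ge0\) is equivalent to \(x_i\le -q_i=b_i\). For the equality constraint,
\[
\sum_i w_ix_i=\sum_i\frac{-q_i-2p_iz_i}{2p_i}=-\sum_i\frac{q_i}{2p_i}-\sum_i z_i ,
\]
so \(\sum_i z_i=1\) is equivalent to \(\sum_i w_ix_i=c\) with \(c\) as in \eqref{eq:csp-to-p}. We also check the standing hypothesis of \eqref{P}:
\[
\sum_i w_ib_i=-\sum_i\frac{q_i}{2p_i}>c .
\]

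For the objective, complete the square:
\[
p_iz_i^2+q_iz_i=p_i\Bigl(z_i+\frac{q_i}{2p_i}\Bigr)^2-\frac{q_i^2}{4p_i}.
\]
Since \(z_i+\dfrac{q_i}{2p_i}=-\dfrac{x_i}{2p_i}\), the first term equals \(\dfrac{x_i^2}{4p_i}=\tfrac12 w_ix_i^2\). Hence the CSP objective equals
\[
\tfrac12\sum_i w_ix_i^2-\sum_i\frac{q_i^2}{4p_i},
\]
which is a positive multiple of the objective of \eqref{P} plus a constant. Optimality is therefore preserved in both directions.

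There is no real obstacle in either part; the only step needing care is the sign bookkeeping in \(c\), the inequality directions, and the completed square.
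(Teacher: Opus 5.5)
Your proof is correct and follows the paper's argument. Like the paper, you substitute directly: the affine change of variables maps the feasible sets bijectively and rescales the objective by a positive factor (plus a constant in the CSP case). The only cosmetic difference is that you complete the square in the CSP objective, whereas the paper expands \(\sum_i w_i x_i^2\) under \(x_i=-q_i-2p_iz_i\); both give the same identity.
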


\begin{proof}
Under \eqref{eq:zi_xi1} and \eqref{eq:variables0},
\[
w_i x_i^2
=
w_i\left(b_i-\frac{\overline z_i}{w_i}\right)^2
=
\overline w_i(\overline z_i-\overline y_i)^2.
\]
Thus, the two objective functions differ only by the positive factor
\(1/2\). Moreover,
\[
\sum_{i=1}^n w_i x_i=c
\iff
\sum_{i=1}^n\overline z_i
=
\sum_{i=1}^n w_i b_i-c
=a,
\]
and
\[
x_i\le b_i
\Longleftrightarrow
\overline{z}_i\ge 0,\quad i\in\mathcal{N}.
\]
The inverse transformation is
\[
x_i=b_i-\frac{\overline z_i}{w_i},
\]
so the correspondence is bijective and preserves optimality.

For problem~\eqref{CSP}, choose the parameters in
\eqref{eq:csp-to-p}. Then \(\sum_{i=1}^n w_i b_i-c=1>0\),
so the assumptions of problem~\eqref{P} are satisfied. Under
\eqref{eq:csp-change},
\[
x_i\le b_i
\iff
-q_i-2p_i z_i\le-q_i
\iff
z_i\ge0,
\]
and
\[
\sum_{i=1}^n w_i x_i
=
-\sum_{i=1}^n\frac{q_i}{2p_i}
-\sum_{i=1}^n z_i
=c
\iff
\sum_{i=1}^n z_i=1.
\]
Finally,
\[
\sum_{i=1}^n w_i x_i^2=
\sum_{i=1}^n
\frac{(q_i+2p_i z_i)^2}{2p_i}=
2\sum_{i=1}^n(p_i z_i^2+q_i z_i)
+\sum_{i=1}^n\frac{q_i^2}{2p_i}.
\]
The last term is constant, and the remaining factor is positive.
Therefore, the transformation preserves minimizers, proving the
equivalence of problems~\eqref{CSP} and~\eqref{P}.
\end{proof}

Next, we clarify the relation between problem~\eqref{P} and the ZWMVA problem. Under the assumptions \(b\in\mathbb{R}^n_{++}\) and \(c>0\), the nonnegativity constraints are automatically satisfied at optimality. Hence, these two problems share the same set of optimal solutions.
\medskip
\begin{theorem}\label{thm:P-ZWMVA}

Assume that \(b\in\mathbb{R}^n_{++}\) and \(c>0\). Then every optimal solution of problem~\eqref{P} is nonnegative. Consequently, problems~\eqref{P} and~\eqref{ZWMVA} have the same set of optimal solutions.
\end{theorem}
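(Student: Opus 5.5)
We will use the threshold characterization of Theorem~\ref{theorem 2.1}. Problem~\eqref{P} is feasible: since \(c<\sum_i w_i b_i\), the point \(x=\min\{\tau,b\}\) with \(\tau\) sufficiently negative has \(\sum_i w_i x_i<c\), while \(x=b\) has \(\sum_i w_i x_i>c\). By continuity of \(g\) some \(\tau\) attains \(c\). Hence every optimal solution has the form \(x^*=\min\{\tau^*,b\}\) with \(g(\tau^*)=c\), and \(\tau^*\) is unique.

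\textbf{Nonnegativity.} We show \(\tau^*>0\). Suppose instead \(\tau^*\le 0\). Then each component satisfies \(x_i(\tau^*)=\min\{\tau^*,b_i\}\le \tau^*\le 0\). Since \(w\in\mathbb{R}^n_{++}\), this gives \(g(\tau^*)\le 0<c\), a contradiction. So \(\tau^*>0\), and because \(b\in\mathbb{R}^n_{++}\), every component \(x_i^*=\min\{\tau^*,b_i\}\) is strictly positive. Thus every optimal solution of~\eqref{P} is nonnegative.

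\textbf{Equality of solution sets.} Write \(F_P\) for the feasible set of~\eqref{P} and \(F_Z=F_P\cap\{x\ge0\}\) for that of~\eqref{ZWMVA}. On the common constraint \(\sum_i w_ix_i=c\), the two objectives differ by the constant \(c^2/W_{sum}\), as noted in the introduction. Take the unique optimal solution \(x^*\) of~\eqref{P}. It lies in \(F_Z\) by the previous step. Since \(F_Z\subseteq F_P\), \(x^*\) is optimal for~\eqref{ZWMVA}, and the two optimal values differ by the same constant.

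For the converse, let \(\hat x\) be optimal for~\eqref{ZWMVA}. Its objective value equals that of \(x^*\), so \(\hat x\in F_P\) attains the optimal value of~\eqref{P} and is optimal there. By uniqueness, \(\hat x=x^*\). The same argument works without invoking uniqueness, since it only matches optimal values.

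The only step that needs care is checking that Theorem~\ref{theorem 2.1} applies, that is, verifying feasibility and using the positivity of \(w\) in the sign argument; the rest is routine.
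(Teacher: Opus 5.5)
Your proof is correct, but it takes a different route from the paper's. The paper does not invoke Theorem~\ref{theorem 2.1} at all. It argues by direct improvement: if an optimal \(x^*\) had \(x_j^*<0\), then \(\sigma=\sum_{i\ne j}w_ix_i^*=c-w_jx_j^*>c>0\). Setting the \(j\)th coordinate to \(0\) and multiplying the others by \(c/\sigma\in(0,1)\) then gives a point that is still feasible (here \(b>0\) is what guarantees \(\bar x_j=0\le b_j\)) and has strictly smaller objective.

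You instead read off the sign of the threshold from \(g(\tau^*)=c>0\). Once the threshold characterization is available this is shorter, and it gives the sharper conclusion \(x^*>0\) componentwise. The scaling argument cannot exclude zero components, since for \(x_j^*=0\) the scaling factor is \(1\).

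Your version also makes explicit two points that the paper compresses into the word ``Consequently''. The first is that \eqref{P} actually has an optimal solution, which your intermediate-value argument for \(g\) supplies. The second is the value-matching argument showing that every optimal solution of \eqref{ZWMVA} is optimal for \eqref{P}.

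The trade-off is that your proof rests on Theorem~\ref{theorem 2.1}, which the paper states without proof. The paper's perturbation argument is self-contained: it uses only feasibility and the positivity of \(w\), \(b\), and \(c\).
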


\begin{proof}

Suppose for the purpose of contradiction that \(x^*\) is an optimal solution of problem~\eqref{P} and that \(x_j^*<0\) for some \(j\in\mathcal{N}\). Since \(x^*\) is feasible, we have \(\sum_{i=1}^n w_i x_i^*=c\) and hence
\[
\sigma=\sum_{i\ne j} w_i x_i^*=c-w_jx_j^*>c.
\]
Define \(\bar x\in\mathbb{R}^n\) by
\[
\bar x_i=
\begin{cases}
\dfrac{c}{\sigma}x_i^*, & i\ne j,\\[1ex]
0, & i=j,
\end{cases}\quad i =1,\ldots,n.
\]
Then we obtain
\[
\sum_{i=1}^n w_i \bar x_i
=
\sum_{i\ne j} w_i \frac{c}{\sigma}x_i^*
=
\frac{c}{\sigma}\sum_{i\ne j} w_i x_i^*
=c,
\]
which implies that \(\bar{x}\) satisfies the equality constraint. Moreover, since \(0<c/\sigma<1\), for every \(i\ne j\), if \(x_i^*\ge0\), then
\(\bar x_i=(c/\sigma)x_i^*\le x_i^*\le b_i\); if \(x_i^*<0\), then
\(\bar x_i<0<b_i\). Moreover, \(\bar x_j=0<b_j\). Hence
\(\bar x\le b\), and therefore \(\bar x\) is feasible for problem~\eqref{P}.

Note that
\[
\sum_{i=1}^n w_i \bar x_i^2
=
\sum_{i\ne j} w_i\left(\frac{c}{\sigma}x_i^*\right)^2
=
\left(\frac{c}{\sigma}\right)^2 \sum_{i\ne j} w_i (x_i^*)^2
<
\sum_{i=1}^n w_i (x_i^*)^2,
\]
which contradicts the optimality of \(x^*\). Therefore, every optimal solution of problem~\eqref{P} is nonnegative.
\end{proof}

\section{Dynamic-Threshold Algorithm}\label{sec.3}

    We now turn to DTA for problem~\eqref{P} and prove its correctness. Drawing on the nondecreasing threshold updates in Condat's efficient algorithm, the proposed DTA maintains an active set \(\mathcal S\subseteq\mathcal N\) and a threshold \(\tau\) satisfying the invariant
\begin{equation}\label{eq:1}
\sum_{i\in\mathcal S} w_i(b_i-\tau)=D,
\quad
D=\sum_{i=1}^n w_i b_i-c>0.
\end{equation}
    
%

%

DTA starts from an initial singleton active set, for instance \(\mathcal{S} = \{1\}\) with \(\tau = (w_1 b_1 - D)/w_1\). In subsequent iterations, it dynamically compares the current threshold \(\tau\) with the upper bound \(b_i\) to update \(\mathcal{S}\) and \(\tau\).
At termination, the active set \(\mathcal S\) is expected to coincide with
\(
\{\,i\in\mathcal N:\ b_i>\tau\,\},
\)
in which case the associated optimal solution is recovered as
\(x_i^*=\tau\ (i\in\mathcal{S})\), \(x_i^*=b_i\ (i\notin\mathcal{S})\).

\begin{algorithm}
	\caption{\large \bfseries  Dynamic-Threshold Algorithm (DTA)}
	\vspace*{2pt} 
	\label{algorithm}
	\textbf{Input:} $w \in \mathbb{R}^n_{++}$, $b \in \mathbb{R}^n$, $c\in \mathbb{R}$, and $ c < \sum_{i=1}^n w_i b_i$.
	
	\textbf{Output:} optimal solution $x^*$.
	\begin{enumerate}[nosep,leftmargin=*]

		\item Set $\mathcal{J} = \emptyset$, $\mathcal{S} = \{1\}$, 
		$D = \sum_{i=1}^{n} w_i b_i - c$,  $W= w_1$, 
		$\tau = \dfrac{w_1 b_1 - D}{W}$.
		\item For $i = 2,\ldots,n$, do:
        \begin{itemize}
			\item[] If $b_i > \tau$, set $W=W + w_i$, $\displaystyle \tau = \tau + \frac{w_i(b_i-\tau)}{W }$.
			\begin{itemize}
				\item[] If $ \tau>  \rho_i = b_i - D/w_i$, set $\mathcal{S}= \mathcal{S} \cup \{i\}$;
				  else, 
				 
				 set $\mathcal{J} =\mathcal{J}\cup \mathcal{S}$, $\mathcal{S} = \{i\}$, $W= w_i$, $\tau = \rho_i$.
			\end{itemize}
		\end{itemize}
		\item If $\mathcal{J} \neq \emptyset$, for every element $j \in \mathcal{J}$:
		\begin{itemize}
			\item[] If $b_j > \tau$, set $\mathcal{S}= \mathcal{S} \cup \{j\}$ and $W=W + w_j$,
			$\displaystyle \tau = \tau + \frac{w_j(b_j-\tau)}{W }$.
		\end{itemize}
		\item Do, while $|\mathcal{S}| $ changes, for every element $i \in \mathcal{S}$:
		\begin{itemize}
			\item[] If $b_i \le \tau$, set $\mathcal{S}= \mathcal{S} \setminus  \{i\}$ and  $W=W - w_i$, 
			$\displaystyle \tau = \tau + \frac{w_i(\tau - b_i)}{W  }$.
		\end{itemize}
		\item For $i = 1,2,\ldots,n$, return \(x_i^*=\tau\ (i\in\mathcal{S})\), \(x_i^*=b_i\ (i\notin\mathcal{S})\).
	\end{enumerate}
\end{algorithm}

According to Algorithm~\ref{algorithm}, the evolution of DTA is governed by three basic operations: addition, reset, and removal.

\smallskip
\noindent
\textbf{Addition (Steps~2 and 3).}
An addition occurs whenever an index is inserted into the active set \(\mathcal S\). 
In Step~2, if the current index \(i\) satisfies \(b_i>\tau\) but does not trigger a reset, then \(i\) is added to \(\mathcal S\), and both \(W\) and the threshold $\tau$ are updated.
In Step~3, the same update is applied to an index \(j\in\mathcal J\) from the buffer set whenever \(b_j>\tau\). It is easily observed that every addition strictly increases \(\tau\).

\smallskip
\noindent
\textbf{Reset (Step~2).}
For each newly examined index \(i\), if \(b_i>\tau\) and \(\rho_i\ge \tau\), then the current active set \(\mathcal S\) is moved into the buffer set \(\mathcal J\), the active set is reset to the singleton \(\{i\}\), \(W\) is set to \(w_i\), and the threshold is updated to \(\rho_i\). Hence, every reset strictly increases \(\tau\).

\smallskip
\noindent
\textbf{Removal (Step~4).}
The removal occurs when an index \(i\in\mathcal S\) satisfies \(b_i\le\tau\). In this case, \(i\) is removed from \(\mathcal S\), \(W\) and the threshold $\tau$ are also updated. Since \(\tau-b_i\ge 0\) is satisfied at removal, this operation cannot decrease \(\tau\); it is strictly increasing whenever \(b_i<\tau\).

Consequently, every update of DTA yields a nondecreasing threshold, while every addition and every reset increase \(\tau\) strictly. This monotonicity property is the key ingredient in the convergence analysis.

\begin{theorem}
\label{thm:correct-dta}
Let \(w\in\mathbb{R}^n_{++}\), \(b\in\mathbb{R}^n\), and
\(c\in\mathbb{R}\) satisfy \(c<\sum_{i=1}^n w_i b_i\). Then DTA terminates finitely and returns an optimal solution of
problem~\eqref{P}.
\end{theorem}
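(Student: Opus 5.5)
The plan is to prove correctness through the invariant \eqref{eq:1}, that is, \(\sum_{i\in\mathcal S} w_i(b_i-\tau)=D\) with \(W=\sum_{i\in\mathcal S}w_i\). Together with the monotonicity of \(\tau\) noted above, this invariant should give the characterization \(\mathcal S=\{i:\ b_i>\tau\}\) at termination. Theorem~\ref{theorem 2.1} then finishes the argument.

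\emph{Step 1 (invariant).} After Step~1 we have \(w_1(b_1-\tau)=D\) by the choice of \(\tau\). For an addition, the update is equivalent to \((W+w_i)\tau'=W\tau+w_ib_i\). Hence
\[
\sum_{\mathcal S\cup\{i\}}w_k(b_k-\tau')=\sum_{\mathcal S}w_kb_k+w_ib_i-(W+w_i)\tau'=\sum_{\mathcal S}w_kb_k-W\tau=D.
\]
A tentative addition in Step~2 that is immediately overridden by a reset needs no separate treatment, since the reset sets \(\mathcal S=\{i\}\) and \(\tau=\rho_i\), and then \(w_i(b_i-\rho_i)=D\). For a removal, \((W-w_i)\tau'=W\tau-w_ib_i\), and the same computation applies.

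\emph{Step 2 (\(\mathcal S\neq\emptyset\) and termination).} Since \(D>0\), the invariant forces some \(k\in\mathcal S\) with \(b_k>\tau\). This index is never removed in Step~4, so \(\mathcal S\) never becomes empty and \(W>0\) whenever we divide by it. I will verify this carefully at the moment of each removal: just before removing \(i\), the invariant still holds, so some \(k\neq i\) with \(b_k>\tau\) remains. Steps~2 and~3 are single passes. Each sweep of Step~4 either leaves \(|\mathcal S|\) unchanged, which stops the loop, or strictly decreases \(|\mathcal S|\ge1\). Hence there are at most \(n\) sweeps, and the algorithm terminates finitely.

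\emph{Step 3 (final active set).} The last sweep of Step~4 removes nothing, so \(b_i>\tau\) for all \(i\in\mathcal S\) at termination. Conversely, every index outside the final \(\mathcal S\) left \(\mathcal S\), or was never admitted, at a moment when its bound was at most the then-current threshold. This covers three cases:
\begin{itemize}
\item an index skipped in Step~2, where \(b_i\le\tau\);
\item an index of \(\mathcal J\) rejected in Step~3, where \(b_j\le\tau\);
\item an index removed in Step~4, where \(b_i\le\tau\).
\end{itemize}
Indices moved to \(\mathcal J\) by a reset are re-examined in Step~3, so they fall into the second case or are re-added. Since \(\tau\) is nondecreasing throughout, \(b_i\le\tau_{\mathrm{final}}\) holds for all \(i\notin\mathcal S\). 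Thus \(\mathcal S=\{i:\ b_i>\tau\}\), and the returned vector is exactly \(x(\tau)\) from \eqref{eq:xgtau}. Finally,
\[
g(\tau)=\sum_i w_ib_i-\sum_{i\in\mathcal S}w_i(b_i-\tau)=\sum_i w_ib_i-D=c,
\]
so Theorem~\ref{theorem 2.1} gives optimality.

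The main obstacle is the bookkeeping in Step~3, namely certifying that every excluded index was excluded under \(b\le\tau\) at some time. This step relies entirely on the global monotonicity of \(\tau\), including across resets and the buffer pass. I would make it rigorous by listing every way an index can end outside \(\mathcal S\), as above, and appealing to the monotonicity remarks preceding the theorem in each case.
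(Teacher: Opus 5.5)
Your proposal is correct and follows the same route as the paper's proof. The invariant \eqref{eq:1} with $D>0$ keeps $\mathcal S$ nonempty, so Step~4 stops after finitely many sweeps. The case analysis of how an index ends up outside $\mathcal S$, combined with monotonicity of $\tau$, gives $\mathcal S=\{i\in\mathcal N:b_i>\tau\}$. Finally, $g(\tau)=c$ and Theorem~\ref{theorem 2.1} give optimality.

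You are more explicit than the paper in three places: you verify the invariant algebraically, you treat the tentative addition that a reset overrides, and you check nonemptiness at each removal, which also guarantees $W>0$ in the update. Note that this per-removal check is what carries the argument. Your earlier claim that a fixed index with $b_k>\tau$ is never removed is false in general, because $\tau$ grows.
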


\begin{proof}
We first prove finite termination. Steps~2 and 3 each consist of a finite scan. In Step~4, every
nonterminal iteration removes at least one index from \(\mathcal S\).
On the other hand, \eqref{eq:1} gives \(\sum_{i\in\mathcal S}w_i(b_i-\tau)=D>0\),
so at least one index \(i\in\mathcal S\) satisfies \(b_i>\tau\). Hence
\(\mathcal S\) never becomes empty. Step~4 therefore has at most
\(n-1\) iterations that remove indices, followed by a terminating
iteration. Thus DTA terminates finitely.

Let \((\mathcal S,\tau)\) be the active set and threshold at
termination. The stopping rule in Step~4 gives
\[
i\in\mathcal S\Longrightarrow b_i>\tau.
\]
Now take \(i\notin\mathcal S\). The index \(i\) was either skipped in
Step~2, transferred to \(\mathcal J\) in Step~2 and not reinserted into
\(\mathcal S\) when \(\mathcal J\) was scanned in Step~3, or removed
in Step~4. In each case, there was a threshold
\(\widehat{\tau}\) such that \(b_i\le\widehat{\tau}\).
Since the threshold is nondecreasing, \(b_i\le\widehat{\tau}\le\tau\).
It follows that
\[
\mathcal S=\{i\in\mathcal N:b_i>\tau\}.
\]

The solution returned by DTA is consequently
\(x_i^*=\tau\ (i\in\mathcal{S})\), \(x_i^*=b_i\ (i\notin\mathcal{S})\),
and hence \(x_i^*=\min\{b_i,\tau\}\). \eqref{eq:1} is initialized in Step~1 and preserved by all the addition, reset, and removal updates. Therefore,
\[
\sum_{i=1}^n w_i x_i^*
=
\sum_{i\notin\mathcal S} w_i b_i+\sum_{i\in\mathcal S} w_i\tau
=
\sum_{i=1}^n w_i b_i-\sum_{i\in\mathcal S} w_i(b_i-\tau)
=
\sum_{i=1}^n w_i b_i-D
=c.
\]
Thus \(g(\tau)=c\), and
Theorem~\ref{theorem 2.1} shows that \(x^*\) is optimal for
problem~\eqref{P}.
\end{proof}

\begin{remark}

At each update in the removal step, \(\tau\) increases or remains unchanged, and
\(|\mathcal S|\) decreases. The active set \(\mathcal S\) cannot become empty. Indeed, if \(\mathcal S=\{i\}\), then \(w_i(b_i-\tau)=D>0\), and hence \(\tau=b_i-D/w_i<b_i\). Thus \(i\) cannot be removed from \(\mathcal{S}\), which means that
\(|\mathcal S|\ge1\) during the removal step. Consequently, \(\tau\) never decreases during DTA, and whenever an update leaves \(\tau\) unchanged, \(|\mathcal S|\) strictly decreases.
\end{remark}

\section{Analysis of the Reset Mechanism}\label{sec.4}

The reset mechanism is one of the main features distinguishing DTA from the simpler no-reset variant. It originates from Condat's algorithm \cite{18} in the context of simplex projection, where it is intended to rapidly correct the threshold estimate at an early stage and thereby avoid unnecessary iterations. In DTA, reset serves a similar purpose: it allows the method to abandon an unfavorable active set and rebuild it around a more promising element, while still retaining the possibility of reintroducing previously discarded elements.

\subsection{When Reset Becomes Inactive}

The reset mechanism can be highly beneficial on some instances, but it is not always active and does not necessarily improve performance in every regime. We therefore begin by identifying a sufficient condition under which the reset condition \(\rho_i\ge\tau\) becomes inactive.

\begin{theorem}\label{theorem*:4.1}
Let \(w\in\mathbb{R}_{++}^n\), \(b\in\mathbb{R}^n\), \(c\in\mathbb{R}\) satisfying \(c<\sum_{i=1}^n w_i b_i\), and \(D=\sum_{i=1}^n w_i b_i-c\). Define
\[
w_{\min}=\min_i w_i,
\quad
w_{\max}=\max_i w_i,
\quad
b_{\min}=\min_i b_i,
\quad
b_{\max}=\max_i b_i,
\]
\[
\kappa=\left\lfloor \frac{w_{\max}}{w_{\min}}\right\rfloor+1,
\quad
\eta=\frac{1}{w_{\max}}-\frac{1}{\kappa w_{\min}}>0.
\]
If \(|\mathcal S|\ge \kappa\) and
\begin{equation}
\label{eq:assump0}
    b_{\max}-b_{\min}< D\eta,
\end{equation}
then no \(i\notin\mathcal S\) can satisfy the reset condition \(\rho_i\ge\tau\).
Furthermore, if \(b\in\mathbb{R}_{++}^n\) and
\begin{equation}
\label{eq:assump1}
    c=\gamma\sum_{i=1}^n w_i b_i
\quad\text{for some }\gamma\in(0,1),
\end{equation}
then the same conclusion holds whenever \(|\mathcal S|\ge \kappa\) and
\begin{equation}\label{eq:n_n0}
    n>N_0,\quad N_0=
\left\lfloor
\frac{b_{\max}-b_{\min}}
{(1-\gamma)w_{\min}b_{\min}\eta}
\right\rfloor.
\end{equation}
\end{theorem}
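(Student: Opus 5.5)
We will use the invariant \eqref{eq:1} directly. Write $W_{\mathcal S}=\sum_{i\in\mathcal S}w_i$. Solving \eqref{eq:1} for the threshold gives
\[
\tau=\frac{\sum_{j\in\mathcal S}w_jb_j-D}{W_{\mathcal S}}\ \ge\ b_{\min}-\frac{D}{W_{\mathcal S}},
\]
because the weighted average of the $b_j$ over $\mathcal S$ is at least $b_{\min}$. When $|\mathcal S|\ge\kappa$ we have $W_{\mathcal S}\ge\kappa w_{\min}$, and hence
\[
\tau\ \ge\ b_{\min}-\frac{D}{\kappa w_{\min}}.
\]
For any index $i\notin\mathcal S$ we also have $\rho_i=b_i-D/w_i\le b_{\max}-D/w_{\max}$.

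The reset condition $\rho_i\ge\tau$ would therefore force
\[
b_{\max}-\frac{D}{w_{\max}}\ \ge\ b_{\min}-\frac{D}{\kappa w_{\min}},
\qquad\text{that is,}\qquad
b_{\max}-b_{\min}\ \ge\ D\eta .
\]
This contradicts \eqref{eq:assump0}, which proves the first claim.

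Positivity of $\eta$ is immediate. Since $\kappa>w_{\max}/w_{\min}$, we get $\kappa w_{\min}>w_{\max}$, so $1/(\kappa w_{\min})<1/w_{\max}$.

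A small timing point deserves care. In Step~2 the reset test is applied after the tentative update of $\tau$ with index $i$. That tentative $\tau$ corresponds to the enlarged set $\mathcal S\cup\{i\}$, still with total weight at least $\kappa w_{\min}$. The invariant \eqref{eq:1} holds for this tentative pair, since the update formula is exactly the one that preserves it. The same lower bound on $\tau$ therefore applies, now with $\mathcal S\cup\{i\}$ in place of $\mathcal S$. I will state this explicitly so the argument covers the actual comparison made by the algorithm.

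For the second claim, substitute \eqref{eq:assump1}:
\[
D=(1-\gamma)\sum_{i}w_ib_i\ \ge\ (1-\gamma)\,n\,w_{\min}b_{\min}>0,
\]
using $b\in\mathbb{R}^n_{++}$. Hence $D\eta\ge(1-\gamma)nw_{\min}b_{\min}\eta$. Condition \eqref{eq:assump0} then holds as soon as
\[
n>\frac{b_{\max}-b_{\min}}{(1-\gamma)w_{\min}b_{\min}\eta}.
\]
Since $n$ is an integer, $n>N_0=\lfloor\cdot\rfloor$ is equivalent to $n$ strictly exceeding the real quotient. The first part then applies.

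I expect no real obstacle. The only delicate points are the timing of the tentative update in Step~2 and the floor/strict-inequality bookkeeping in \eqref{eq:n_n0}.
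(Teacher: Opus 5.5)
Your proof is correct and is essentially the paper's argument. The paper rearranges $\rho_i\ge\tau$ into $b_i-\bar b_{\mathcal S}\ge D(1/w_i-1/W_{\mathcal S})$ and bounds the two sides by $b_{\max}-b_{\min}$ and $D\eta$; you apply the same two estimates to $\rho_i$ and $\tau$ separately, and the second part (via $D\ge(1-\gamma)nw_{\min}b_{\min}$ and the floor bookkeeping) matches the paper. Your remark on the tentative threshold in Step~2 is valid but not needed: a reset there requires $b_i>\tau$, so the tentative value exceeds the old $\tau$, and the stated version already rules out algorithmic resets.
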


\begin{proof}
Suppose, for contradiction, that \(\rho_i\ge\tau\) for some \(i\notin\mathcal S\). By definition, we get
\[
b_i-\frac{D}{w_i}
\ge
\frac{\sum_{j\in\mathcal S} w_j b_j-D}{\sum_{j\in\mathcal S} w_j}.
\]
Let
\[
W_S=\sum_{j\in\mathcal S} w_j,
\quad
\bar b_S=\frac{1}{W_S}\sum_{j\in\mathcal S} w_j b_j.
\]
Then
\begin{equation}\label{eq:reset-1}
b_i-\bar b_S
\ge
D\left(\frac{1}{w_i}-\frac{1}{W_S}\right).
\end{equation}
Since \(b_i\le b_{\max}\) and \(\bar b_S\ge b_{\min}\), we have
\begin{equation}\label{eq:reset-2}
b_i-\bar b_S\le b_{\max}-b_{\min}.
\end{equation}
Moreover, \(w_i\le w_{\max}\) and \(W_S\ge |\mathcal S|w_{\min}\), and thus
\[
\frac{1}{w_i}-\frac{1}{W_S}
\ge
\frac{1}{w_{\max}}-\frac{1}{|\mathcal S|w_{\min}}
\ge
\frac{1}{w_{\max}}-\frac{1}{\kappa w_{\min}}
=
\eta.
\]
Combining this with \eqref{eq:reset-1} and \eqref{eq:reset-2} gives \(b_{\max}-b_{\min}\ge D\eta\),
contradicting \eqref{eq:assump0}. This proves the first claim. For the second claim, if \(b\in\mathbb{R}_{++}^n\) and \eqref{eq:assump1} holds, then we obtain
\[
D=(1-\gamma)\sum_{i=1}^n w_i b_i
\ge (1-\gamma)n w_{\min}b_{\min}.
\]
Hence,
\[
b_{\max}-b_{\min}
<
(1-\gamma)n w_{\min}b_{\min}\eta
\Longrightarrow b_{\max}-b_{\min}< D\eta.
\]
Therefore, by the first part, it suffices to require \eqref{eq:n_n0}.
\end{proof}


Theorem~\ref{theorem*:4.1} shows that the reset condition becomes impossible once the active set is sufficiently large. Specifically, if \(|\mathcal S|\ge \kappa\) and \(b_{\max}-b_{\min}< D\eta\), then \(\rho_i\ge\tau\) cannot hold for any \(i\notin\mathcal S\). Moreover, the theorem converts this criterion into a dimension-dependent threshold: if \(n>N_0\), then \(b_{\max}-b_{\min}< D\eta\) holds automatically, so any reset can occur only before the active set reaches size \(\kappa\). This prediction is confirmed by the uniformly distributed experiments in Section~\ref{sec.5}. In this setting,
\[
\gamma = 0.6,\quad b_{\max} = 15,\quad b_{\min} = 1,\quad w_{\max} = 25,\quad w_{\min} = 10,
\]
which give \(\kappa=3\), \(\eta=1/150\), and \(N_0=525\). Therefore, for all tested dimensions with \(n>525\), Theorem~\ref{theorem*:4.1} implies that the reset is impossible once \(|\mathcal S|\ge 3\).

To validate this, we repeated the uniformly distributed experiment $10$ times for each $n$ and recorded both the number of reset operations (denoted as \(n_r\)) and the active set size $|\mathcal{S}|$ at each reset stage (see Table~\ref{table:reset}). The results show that the number of resets is always between \(0\) and \(2\), and $|\mathcal{S}|$ at reset never exceeds \(1\), which is in full agreement with Theorem~\ref{theorem*:4.1}.

\begin{table}[htbp]
	\centering
	\caption{Ten runs of the uniformly distributed experiments: Average, maximum, and minimum numbers of reset operations (\(n_r\)) in DTA, and average, maximum, and minimum active set sizes \(|\mathcal S|\) at reset stage.}
	\label{table:reset}
	\small
	\setlength{\tabcolsep}{14pt} 
	\begin{tabular}{ccccccc}
		\toprule
		$n$ & \multicolumn{1}{c}{$5\times 10^4$} & \multicolumn{1}{c}{$10^5$} & \multicolumn{1}{c}{$5\times 10^5$} & \multicolumn{1}{c}{$10^6$} & \multicolumn{1}{c}{$5\times 10^6$} & \multicolumn{1}{c}{$10^7$} \\
		\midrule
        &\multicolumn{6}{c}{Numbers of reset operations in DTA}\\
		Avg\((n_r)\)     &  2.0 & 1.0  & 0.0 & 1.0  & 0.0 & 0.0  \\
		Max\((n_r)\)      & 2.0 & 1.0 & 0.0   & 1.0  & 0.0  & 0.0  \\
		Min\((n_r)\)      & 2.0 & 1.0 & 0.0  & 1.0 & 0.0 & 0.0  \\
        \midrule
        &\multicolumn{6}{c}{Active set size when triggering the reset} \\
		Avg\((|S|)\)     &  1.0 & 1.0  &--  & 1.0  & -- & --  \\
		Max\((|S|)\)      & 1.0 & 1.0 & -- & 1.0 & -- & --  \\
		Min\((|S|)\)      & 1.0 & 1.0 & -- & 1.0 & -- & --  \\
		\bottomrule
	\end{tabular}
\end{table}

\subsection{A No-Reset Variant of DTA}

These observations suggest that, in this setting, the reset mechanism plays only a limited role. To isolate the effect of the reset mechanism, we consider a simplified variant of DTA in which reset is completely removed. More precisely, whenever a newly encountered element satisfies \(b_i>\tau\), it is directly merged into the current active set, without checking the condition \(\rho_i\ge\tau\), without resetting \(\mathcal S\), and without storing discarded elements in the auxiliary set \(\mathcal J\). The resulting algorithm is called the No-Reset Dynamic-Threshold Algorithm (NDTA). Its finite termination and correctness are established below.

\begin{algorithm}
	\caption{\large \bfseries No-Reset Dynamic-Threshold Algorithm (NDTA)}
	\vspace*{2pt} 
	\label{algorithm2}
	\textbf{Input:} 
    $w\in\mathbb{R}^n _{++}$, $b\in\mathbb{R}^n$, $c\in \mathbb{R}$, and $ c < \sum_{i=1}^n w_i b_i$.
    
	
	\textbf{Output:} optimal solution $x^*$.
	\begin{enumerate}[nosep,leftmargin=*]

		\item Set $\mathcal{S} = \{1\}$, 
		$D = \sum_{i=1}^{n} w_i b_i - c$,  $W= w_1$, 
		$\tau = \dfrac{w_1 b_1 - D}{W}$.
		\item For $i = 2,\ldots,n$, do:
		\begin{itemize}
			\item[] If $b_i > \tau$, 			
				 set $\mathcal{S}= \mathcal{S} \cup \{i\}$ and $W=W + w_i$,
				$\displaystyle \tau = \tau + \frac{w_i(b_i-\tau)}{W }$.
			\end{itemize}
		\item Do, while $|\mathcal{S}| $ changes, for every element $i \in \mathcal{S}$:
		\begin{itemize}
			\item[] If $b_i \le \tau$, set $\mathcal{S}= \mathcal{S} \setminus  \{i\}$ and  $W=W - w_i$, 
			$\displaystyle \tau = \tau + \frac{w_i(\tau - b_i)}{W  }$.
		\end{itemize}
		\item For $i = 1,2,\ldots,n$, return \(x_i^*=\tau\ (i\in\mathcal{S})\), \(x_i^*=b_i\ (i\notin\mathcal{S})\). 
	\end{enumerate}
\end{algorithm}

\begin{theorem}
Let \(w\in\mathbb{R}^n_{++}\), \(b\in\mathbb{R}^n\), and \(c\in\mathbb{R}\) satisfy \(c<\sum_{i=1}^n w_i b_i\).
Then NDTA terminates finitely and returns an optimal solution of problem~\eqref{P}.
\end{theorem}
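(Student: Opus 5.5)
The argument will mirror the proof of Theorem~\ref{thm:correct-dta}, with simplifications because NDTA has no reset and no buffer $\mathcal J$. The plan has three parts: verify the invariant \eqref{eq:1}, establish monotonicity of $\tau$ and finite termination, and then characterize the final active set.

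\textbf{Invariant.} I will first check that \eqref{eq:1} holds at initialization. Indeed, $w_1(b_1-\tau)=D$ by the choice $\tau=(w_1b_1-D)/w_1$. I will then show that \eqref{eq:1} is preserved by the two updates.

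For an addition, write $W'=W+w_i$ and $\tau'=\tau+w_i(b_i-\tau)/W'$. Then $W'\tau'=W\tau+w_ib_i$, and hence
\[
\sum_{j\in\mathcal S\cup\{i\}}w_j b_j-W'\tau'=\sum_{j\in\mathcal S}w_jb_j-W\tau=D .
\]
For a removal, write $W'=W-w_i$ and $\tau'=\tau+w_i(\tau-b_i)/W'$. Then $W'\tau'=W\tau-w_ib_i$, and the same cancellation gives the invariant again.

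\textbf{Monotonicity and termination.} Every addition has $b_i>\tau$ and therefore strictly increases $\tau$. Every removal has $b_i\le\tau$ and therefore does not decrease $\tau$. Both conclusions require $W'>0$ in the removal formula, i.e.\ that $\mathcal S$ never becomes empty.

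This nonemptiness is the one point needing care, and I will argue it exactly as in the Remark following Theorem~\ref{thm:correct-dta}. If $\mathcal S=\{i\}$, the invariant gives $\tau=b_i-D/w_i<b_i$, so $i$ cannot be removed. Consequently $|\mathcal S|\ge1$ throughout, and the division is well defined. More generally, \eqref{eq:1} with $D>0$ forces some $j\in\mathcal S$ to satisfy $b_j>\tau$.

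Step~2 is a finite scan. In Step~3, every non-terminal pass removes at least one index, so there are at most $n-1$ such passes followed by a final pass in which nothing changes. Hence NDTA terminates.

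\textbf{Optimality.} At termination the stopping rule gives $b_i>\tau$ for all $i\in\mathcal S$. An index $i\notin\mathcal S$ was either skipped in Step~2 or removed in Step~3. In both cases $b_i\le\widehat\tau$ for some earlier threshold $\widehat\tau$, and monotonicity gives $b_i\le\widehat\tau\le\tau$. Therefore
\[
\mathcal S=\{i:b_i>\tau\},
\]
and the returned vector is $x^*=\min\{\tau,b\}=x(\tau)$.

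Finally, using \eqref{eq:1},
\[
g(\tau)=\sum_{i=1}^n w_ib_i-\sum_{i\in\mathcal S}w_i(b_i-\tau)=\sum_{i=1}^n w_ib_i-D=c .
\]
Theorem~\ref{theorem 2.1} then yields that $x^*$ is optimal for problem~\eqref{P}.
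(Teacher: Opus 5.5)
Your proof is correct and follows the paper's route. The invariant \eqref{eq:1} keeps $\mathcal S$ nonempty, so every nonterminal removal pass shrinks $|\mathcal S|$; monotonicity of $\tau$ then gives $\mathcal S=\{i:b_i>\tau\}$ at termination; and $g(\tau)=c$ together with Theorem~\ref{theorem 2.1} yields optimality. The only difference is that you explicitly check that the additions and removals preserve \eqref{eq:1} and that the removal update is well defined ($W>0$), which the paper leaves to the DTA analysis and the Remark after Theorem~\ref{thm:correct-dta}.
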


\begin{proof}
Step~2 is a finite scan. The termination argument for Step~4 of DTA
applies directly to Step~3 of NDTA: \eqref{eq:1} prevents
\(\mathcal S\) from becoming empty, and each nonterminal iteration
strictly decreases \(|\mathcal S|\). Hence NDTA terminates finitely.

Let \((\mathcal S,\tau)\) be the active set and threshold at
termination. The stopping rule gives \(b_i>\tau\) for every
\(i\in\mathcal S\). If \(i\notin\mathcal S\), then \(i\) was either
not added in Step~2 or removed in Step~3. In either case,
\(b_i\le\widehat{\tau}\) for some intermediate threshold
\(\widehat{\tau}\). Since the threshold is nondecreasing,
\(b_i\le\widehat{\tau}\le\tau\). Therefore, \(\mathcal S=\{i\in\mathcal N:b_i>\tau\}\).
The feasibility and optimality arguments now follow exactly as in the
proof of Theorem~\ref{thm:correct-dta}.
\end{proof}





Compared with DTA, the active set in NDTA evolves only through direct additions in Step~2 and possible deletions in the final removal step, without any intermediate rebuilding process. Therefore, NDTA provides a natural baseline for assessing the role of reset. Although this modification appears minor, it may fundamentally alter the behavior of the algorithm on certain instances.

{
\subsection{Complexity Analysis and Worst-Case Instances}
\label{Complexity Separation and Worst-Case Instances}

We now show that the reset mechanism can lead to a strict separation in computational complexity. We construct a parametric family consisting of a prefix of length \(m_1\) and a suffix of length \(m_2\), where \(n=m_1+m_2\). 
For this family of instances, the time complexities of DTA and NDTA are
\[
\Theta(m_1+m_2^2)
\quad\text{and}\quad
\Theta\bigl((m_1+m_2)^2\bigr),
\]
respectively. When \(m_2^2=\Theta(m_1)\), DTA runs in \(\Theta(n)\) time, whereas NDTA requires \(\Theta(n^2)\) time. When \(m_2=\Theta(n)\), however, DTA also requires \(\Theta(n^2)\) time. Thus, the reset mechanism can reduce the running time substantially on some instances, but it does not eliminate the quadratic worst case.

\begin{example}\label{ex:unified-parametric}
Let \(m_1\ge 3\) and \(m_2\ge 5\), and set \(n=m_1+m_2\). Consider an instance with \(w_i=1\) for all \(i=1,\ldots,n\) and \(D=1\). Partition \(b\) into a prefix \(A=(u_1,\ldots,u_{m_1})\)
and a suffix \(B=(v_1,\ldots,v_{m_2})
\). Set \(c=\sum_{i=1}^{m_1}u_i+\sum_{j=1}^{m_2}v_j-1\).
To construct \(B\), set \(C_1=1\) and
\[
C_k=n(k+1)C_{k-1},
\quad k=2,\ldots,m_2-2.
\]
Choose \(0<\epsilon<\frac{1}{2nC_{m_2-2}}\),
and define
\[
v_1=3-\epsilon C_{m_2-2},
\quad
v_2=v_3=\frac{7}{2},\quad v_i=3-\epsilon C_{i-3},
\quad i=4,\ldots,m_2.
\]
Denote \(V=\sum_{j=1}^{m_2}v_j\).
The choice of \(\epsilon\) ensures that \(v_j>2\) for every \(j=1,\ldots,m_2\).

We next construct \(A\). Set \(u_2=1\). For \(k=3,\ldots,m_1\), suppose that \(u_2,\ldots,u_{k-1}\) have been defined, and let
\(Q_k=\sum_{i=2}^{k-1}u_i+V-1\), \(q_k=m_2+k-1\).
Define
\[
u_k=
\min\left\{
u_{k-1}-1,\;
q_k u_{k-1}-Q_k-1,\;
\frac{Q_k}{q_k-1}-1,\;
-1
\right\}.
\]
Finally, set
\[
\begin{aligned}
u_1=\min\Bigg\{ -2,\;&
\min_{3\le k\le m_1}
\left((k-1)u_k-\sum_{i=2}^{k-1}u_i\right),\\
&
\min_{1\le j\le m_2}
\left(
(m_1+j-1)v_j
-\sum_{i=2}^{m_1}u_i
-\sum_{\ell=1}^{j-1}v_\ell
\right)
\Bigg\}.
\end{aligned}
\]

\end{example}

\begin{theorem}\label{thm:unified-parametric}
Consider the family of instances in Example~\ref{ex:unified-parametric}, with the elements scanned in the order \(u_1,\ldots,u_{m_1},v_1,\ldots,v_{m_2}
\).
On this family, the time complexities of DTA and NDTA are
\[
\Theta(m_1+m_2^2)
\quad\text{and}\quad
\Theta\bigl((m_1+m_2)^2\bigr),
\]
respectively. In particular, if \(m_2^2=\Theta(m_1)\), then DTA runs in
\(\Theta(m_1)\) time, whereas NDTA runs in
\(\Theta(m_1^2)\) time.
\end{theorem}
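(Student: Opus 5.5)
The plan is to trace both algorithms step by step on Example~\ref{ex:unified-parametric}. Since \(w_i=1\) and \(D=1\), invariant \eqref{eq:1} reduces to
\[
\tau=\frac{\sum_{i\in\mathcal S}b_i-1}{|\mathcal S|},
\]
so every threshold value is an explicit average. For each algorithm I would split the cost into the scanning phase (Steps~2--3), which is \(\Theta(n)\), and the removal phase. I would then count the passes of the removal loop and the size of \(\mathcal S\) during each pass, since each pass costs \(\Theta(|\mathcal S|)\). The defining inequalities of \(u_k\), \(u_1\), \(v_j\) and \(C_k\) are what make this count exact, so each clause of the two \(\min\)'s should be matched to the event it enforces.

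\emph{Scanning the prefix.} With \(\mathcal S=\{1\}\) and \(\tau=u_1-1\), I would show by induction on \(k\) that every \(u_k\) (\(k\ge2\)) satisfies \(u_k>\tau\) and is added. The relevant clause is \(u_1\le (k-1)u_k-\sum_{i=2}^{k-1}u_i\), which rearranges to \(u_k>\tau_{k-1}=(u_1+\sum_{i=2}^{k-1}u_i-1)/(k-1)\). I would also show that no reset occurs in DTA, i.e.\ \(\rho_k=u_k-1<\tau_k\) after the tentative update; this should follow from the same clause together with \(u_k\le u_{k-1}-1\). In the same way, the last clause of \(u_1\) guarantees that in NDTA every \(v_j\) is added.

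\emph{DTA.} When \(v_1\approx3\) arrives, \(\tau\) is negative (because \(u_1\le-2\) and all \(u_k\le-1\)), so \(\rho_{v_1}=v_1-1\approx2\ge\tau\) and a reset occurs. This sends the whole prefix into \(\mathcal J\) and sets \(\tau=v_1-1\). I would then check that \(v_2,\dots,v_{m_2}\) are added without further resets; here \(v_2=v_3=7/2\) lift \(\tau\) above \(2\), while the \(v_i\) stay below \(3\). In Step~3 every \(u\in\mathcal J\) satisfies \(u\le-1<\tau\), so this step costs \(\Theta(m_1)\) and adds nothing. The remaining removal phase operates only on a set of size \(\Theta(m_2)\), which gives the upper bound \(O(m_1+m_2^2)\).

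\emph{NDTA.} The active set contains all \(n\) elements. The clauses \(u_k\le q_ku_{k-1}-Q_k-1\) and \(u_k\le Q_k/(q_k-1)-1\), with \(Q_k,q_k\) the sum and count of the remaining active set, are designed so that in each removal pass only the currently smallest-indexed surviving prefix element, and those after it that are already below \(\tau\), is removed. The next prefix element must survive the pass in which its predecessor was removed, and fall below the threshold only once that predecessor has gone. I would verify this by plugging the inequalities into the removal update \(\tau\leftarrow\tau+(\tau-b_i)/W\) and ordering the \(u_k\) as \(u_k\le u_{k-1}-1\). This yields \(\Omega(m_1)\) passes, each of size \(\Theta(n)\), so the running time is \(\Theta(n^2)\) with upper bound \(n\) passes of size \(\le n\).

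\emph{The suffix in both algorithms.} The elements \(v_i=3-\epsilon C_{i-3}\) have gaps growing by the factor \(n(k+1)\), and \(\epsilon<1/(2nC_{m_2-2})\) keeps all of them above \(2\) while \(\tau\) climbs towards \(3\). I would show that each removal raises \(\tau\) by an amount of order \(\epsilon C_{k}/|\mathcal S|\). This increment is enough to push exactly the next \(v\) below \(\tau\), but only after the pass in which it was scanned. Since \(v_1=3-\epsilon C_{m_2-2}\) sits first in the scan order but is the smallest, this forces \(\Theta(m_2)\) passes over a set of size \(\Theta(m_2)\), giving the matching lower bound \(\Omega(m_2^2)\) for DTA. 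The stated \(\Theta(m_1)\) versus \(\Theta(m_1^2)\) comparison for \(m_2^2=\Theta(m_1)\) then follows immediately.

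The main obstacle is this one-removal-per-pass bookkeeping, both for the prefix in NDTA and for the suffix in both algorithms. It requires tracking \(\tau\) to accuracy \(\epsilon C_k\) inside a pass, while earlier-scanned elements change \(W\). I would handle it by an induction on the pass index with an explicit two-sided bound on \(\tau\) at the start of each pass. The growth factor \(n(k+1)\) absorbs the accumulated \(O(1/n)\) perturbations from the other elements.
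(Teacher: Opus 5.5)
You follow the paper's plan: trace both runs explicitly and sum \(|\mathcal S|\) over the removal passes. Your NDTA forward scan matches the paper, and so does your picture of the suffix, where an element is pushed below \(\tau\) only after it was scanned. Two of your traces are wrong, however.

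\textbf{DTA prefix.} The prefix is not absorbed without resets. After reading \(u_2=1\), the tentative threshold is \((u_1-1)+(2-u_1)/2=u_1/2\le -1<0=\rho_2\). So \(u_2\) already triggers a reset, giving \(\mathcal S=\{2\}\) and \(\tau=0\). Then \(u_3,\dots,u_{m_1}\le -1\) are skipped entirely. Next, \(v_1>2\) triggers a second reset, because the tentative value \(v_1/2\) is below \(v_1-1\).

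Hence \(\mathcal J=\{1,2\}\), not the whole prefix. It also contains \(u_2=1\), so your claim that every \(u\in\mathcal J\) satisfies \(u\le -1\) is false as well. The ``no reset in the prefix'' lemma you planned to derive from the \(u_1\)-clause cannot be proved. Your DTA bound survives only because the state after the \(v_1\)-reset is the same in both traces, and Step~2 costs \(\Theta(n)\) anyway.

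\textbf{NDTA prefix (the real gap).} This is the step you single out as the main obstacle, and your mechanism runs in the wrong direction. Because \(u_k\le u_{k-1}-1\) and \(\tau\) never decreases, any pass that removes \(u_2\) also removes every surviving \(u_k\) with \(k\ge 3\), since those are scanned later and are smaller. So ``the smallest-indexed survivor is removed'' cannot produce one removal per pass. Moreover, if the next element in your order is scanned after its predecessor, it already sees the raised threshold in that same pass. It therefore cannot survive the pass in which its predecessor is removed.

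The correct invariant is the paper's. At the start of a pass, the surviving prefix is \(\{u_2,\dots,u_k\}\) and \(\tau=\theta_k=(Q_k+u_k)/q_k\). The two clauses give exactly the needed inequalities:
\begin{itemize}
\item \(u_k\le Q_k/(q_k-1)-1\) is equivalent to \(u_k<\theta_k\);
\item \(u_k\le q_ku_{k-1}-Q_k-1\) is equivalent to \(\theta_k<u_{k-1}\).
\end{itemize}
Hence \(u_2,\dots,u_{k-1}\) survive and the last-scanned \(u_k\) is removed. The threshold then jumps to \(Q_k/(q_k-1)=\theta_{k-1}>u_{k-1}\), but only after \(u_{k-1}\) has already been scanned.

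So the first pass removes \(u_1\) and \(u_{m_1}\). After that, the prefix leaves in the order \(u_{m_1-1},\dots,u_3\), against the scan order, just as in the suffix. The pass sizes are \(m_2+k-1\) rather than uniformly \(\Theta(n)\). Summing gives \(\Theta(m_1m_2+m_1^2)\), and the subsequent suffix phase adds \(\Theta(m_2^2)\).
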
}

\begin{proof}
Let \(M=m_2-2\) and \(E=\epsilon\left(C_M+\sum_{r=1}^{M-1}C_r\right)\). Since \(C_r=n(r+1)C_{r-1}\), we have
\[
\sum_{\ell=1}^{r-1}C_\ell<C_r
\quad (r\ge2) \quad\text{and} \quad E<2\epsilon C_M<\frac1n<1.
\]

We first consider DTA. Initially, \(S=\{1\}\) and \(\tau=u_1-1\).
Since \(u_1<0\), reading \(u_2=1\) triggers a reset, after which \(S=\{2\}\) and \(\tau=0\). Because \(u_k\le-1\) for \(k=3,\ldots,m_1\), none of the remaining
prefix elements enters \(S\). Since \(v_1>2\), reading \(v_1\) triggers a second reset. Thus \(S\)
is replaced by \(\{m_1+1\}\), while indices \(1\) and \(2\) are
deferred in \(\mathcal{J}\). The remaining suffix elements enter \(S\) without further
resets. Indeed, for \(j\ge4\), the threshold before \(v_j\) is read is
\[
3-\frac{\epsilon}{j-1}
\left(C_M+\sum_{r=1}^{j-4}C_r\right),
\]
which is smaller than \(v_j=3-\epsilon C_{j-3}\), since \((j-1)C_{j-3}<C_M\).
The updated threshold is greater than \(2\), whereas
\(v_j-1<2\), so no reset occurs. The cases \(j=2,3\) follow directly
from \(v_2=v_3=7/2\). At the end of the forward scan, \(S=\{m_1+1,\ldots,n\}\). Its threshold is
\[
\tau=\frac{V-1}{m_2}
    =3-\frac{E}{m_2}>2.
\]
Consequently, neither \(u_1\) nor \(u_2\) is reinserted. It remains to examine the removal step. Let \(S_k\) and \(\tau_k\)
denote, respectively, the active set and the threshold at the
beginning of iteration \(k\). At the beginning of the first iteration,
\[
S_1=\{v_1,\ldots,v_{m_2}\},
\quad
\tau_1=3-\frac{E}{m_2}.
\]
Since \(v_1=3-\epsilon C_M\le \tau_1\), the element \(v_1\) is removed. The threshold is then updated within
the same iteration to
\[
\widetilde{\tau}_1
=
3-\frac{\epsilon\sum_{r=1}^{m_2-3}C_r}{m_2-1}.
\]
The elements \(v_2,\ldots,v_{m_2-1}\) remain active, while
\(v_{m_2}<\widetilde{\tau}_1\). Thus, the first iteration eliminates
exactly \(v_1\) and \(v_{m_2}\), and \(S_2=\{v_2,\ldots,v_{m_2-1}\}\). For \(2\le k\le m_2-3\), define \(p_k=m_2-k+1\).
At the beginning of iteration \(k\),
\[
S_k=\{v_2,\ldots,v_{p_k}\},\quad \tau_k
=
3-
\frac{\epsilon\sum_{r=1}^{p_k-3}C_r}{p_k-1}.
\]
For \(4\le i<p_k\),
\[
C_{i-3}
\le C_{p_k-4}
<
\frac{C_{p_k-3}}{p_k-1}
<
\frac{\sum_{r=1}^{p_k-3}C_r}{p_k-1},
\]
hence \(v_i>\tau_k\). Moreover, \(v_2=v_3=7/2>\tau_k\).
On the other hand,
\[
\frac{\sum_{r=1}^{p_k-3}C_r}{p_k-1}
<C_{p_k-3},
\]
so \(v_{p_k}<\tau_k\). Therefore, iteration \(k\) removes exactly \(v_{p_k}\), and
\(S_{k+1}=\{v_2,\ldots,v_{p_k-1}\}\). After iteration \(m_2-3\), only \(v_2\) and \(v_3\) remain. Hence \(S_{m_2-2}=\{v_2,v_3\}\) and \(\tau_{m_2-2}=3\). Since \(v_2=v_3=7/2>3\), the final iteration removes no element and
the removal step terminates. The total number of examined elements is \(m_2+\sum_{k=2}^{m_2-3}(m_2-k)+2
=\Theta(m_2^2)\). Including the forward scan and final reconstruction, the overall time complexity of DTA is  
\[
\Theta(m_1+m_2^2).
\]



We next consider NDTA. Since no reset is performed, it suffices to
show that every element enters the active set. Before \(u_k\) is read,
where \(3\le k\le m_1\), the threshold is
\[
\tau
=
\frac{u_1+\sum_{i=2}^{k-1}u_i-1}{k-1}.
\]
The definition of \(u_1\) gives \(u_1\le (k-1)u_k-\sum_{i=2}^{k-1}u_i\), and therefore \(\tau<u_k\). Similarly, before \(v_j\) is read,
\[
u_1\le
(m_1+j-1)v_j
-\sum_{i=2}^{m_1}u_i
-\sum_{\ell=1}^{j-1}v_{\ell}
\]
implies that the current threshold is smaller than \(v_j\).
Consequently, all \(n\) elements enter the active set.

The inequalities defining \(u_k\) imply
\[
u_k<
\theta_k=
\frac{Q_k+u_k}{q_k}
<u_{k-1},
\quad 3\le k\le m_1.
\]
In the first removal round, \(u_1\) and \(u_{m_1}\) are removed.
Subsequently, whenever the active prefix is
\(\{u_2,\ldots,u_k\}\), its threshold is \(\theta_k\). Thus
\(u_2,\ldots,u_{k-1}\) are retained and \(u_k\) is removed. After
removing \(u_k\), the threshold becomes \(\theta_{k-1}\); since
\(u_{k-1}\) has already been scanned, it remains until the next
round. All suffix elements are retained during these rounds because
\(v_j>2\). Hence the prefix phase requires rounds whose active-set sizes are \(\Theta(m_2+k)\), \(k=3,\ldots,m_1\). Its cost is
\[
\Theta\left(\sum_{k=3}^{m_1}(m_2+k)\right)
=
\Theta(m_1m_2+m_1^2).
\]
After the prefix has been removed, the suffix follows the same
trajectory as for DTA and costs \(\Theta(m_2^2)\). Therefore, NDTA runs in
\[
\Theta(m_1^2+m_1m_2+m_2^2)
=
\Theta\bigl((m_1+m_2)^2\bigr)
\]
time. The proof is complete.
\end{proof}

Theorem~\ref{thm:unified-parametric} shows that the reset mechanism can
change the asymptotic complexity, although it does not eliminate the
quadratic worst case. The strict separation is already generated by
the prefix construction, while the suffix is used to recover the
quadratic behavior of DTA. By isolating the prefix mechanism, we obtain
the following simpler instance that directly separates DTA from NDTA.
\medskip

\begin{corollary}\label{cor:explicit-separation}
For every \(n\ge4\), consider the instance with \(w_i=1\) for
\(i=1,\ldots,n\) and \(D=1\). Define
\[
C_1=1,
\quad
C_k=1+\sum_{\ell=2}^{k}\ell!,
\quad k=2,\ldots,n-1,
\]
and let
\begin{equation}\label{eq:explicit-instance}
b_1=-C_{n-1},
\quad
b_2=1,
\quad
b_i=-C_{i-2},
\quad i=3,\ldots,n.
\end{equation}
Set \(c=\sum_{i=1}^{n}b_i-1\), and scan the elements in the order \(b_1,b_2,\ldots,b_n\). Then DTA
runs in \(\Theta(n)\) time, whereas NDTA runs in \(\Theta(n^2)\) time.
\end{corollary}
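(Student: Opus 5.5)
The plan is to treat the instance \eqref{eq:explicit-instance} as the prefix part of Example~\ref{ex:unified-parametric} with an empty suffix. I would trace DTA and NDTA step by step, exactly as in the proof of Theorem~\ref{thm:unified-parametric}. Throughout, \(w_i=1\) and \(D=1\). Hence every active set \(\mathcal S\) has threshold \(\tau=(\sum_{i\in\mathcal S}b_i-1)/|\mathcal S|\), and \(\rho_i=b_i-1\).

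\emph{DTA.} Step~1 gives \(\mathcal S=\{1\}\) and \(\tau=-C_{n-1}-1\). Reading \(b_2=1>\tau\) produces the tentative threshold \(-C_{n-1}/2\). Since \(\rho_2=0\ge -C_{n-1}/2\), a reset occurs, giving \(\mathcal J=\{1\}\), \(\mathcal S=\{2\}\) and \(\tau=0\). Every later \(b_i=-C_{i-2}\le -1<0\), so the rest of Step~2 is a single comparison per index. In Step~3, \(b_1<0\) is not reinserted. Step~4 makes one pass over \(\mathcal S=\{2\}\), finds \(b_2=1>0\), and stops. The total work is therefore \(\Theta(n)\).

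\emph{NDTA, forward scan.} I first show that every index enters \(\mathcal S\). Before \(b_k\) is read (\(3\le k\le n\)), the threshold is
\[
\tau=\frac{-C_{n-1}-\sum_{r=1}^{k-3}C_r}{k-1}.
\]
So \(b_k>\tau\) is equivalent to
\[
(k-1)C_{k-2}<C_{n-1}+\sum_{r=1}^{k-3}C_r .
\]
For \(k<n\), this follows from \(C_{n-1}\ge C_{k-1}=C_{k-2}+(k-1)!\) together with the bound \(C_m\le 1+m!+(m-1)\cdot(m-1)!\le 2\,m!\) for \(m\ge2\), checking small \(k\) directly. The critical case is \(k=n\). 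There I need
\[
(n-2)C_{n-2}<(n-1)!+\sum_{r=1}^{n-3}C_r .
\]
Writing \(C_{n-2}=C_{n-3}+(n-2)!\), this reduces to
\[
(n-2)C_{n-3}<(n-2)!+\sum_{r=1}^{n-3}C_r .
\]
I would prove it by induction on \(n\), using the factorial recursion \(C_m-C_{m-1}=m!\). Small cases such as \(n=4,5\) are checked by hand. This step is the one I expect to be the main obstacle, because the margins are only lower-order factorial terms.

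\emph{NDTA, removal.} I mirror the prefix analysis of Theorem~\ref{thm:unified-parametric}. The values satisfy \(b_1<b_n<b_{n-1}<\dots<b_3<0<b_2\). Let
\[
\theta_k=\frac{1-\sum_{r=1}^{k-2}C_r}{k-1}
\]
be the threshold of \(\{2,\dots,k\}\). I would show
\[
b_k<\theta_k<b_{k-1}\qquad (4\le k\le n),
\]
which is again an inequality comparing \(C_{k-2}\), \(C_{k-3}\) and partial sums of the \(C_r\). Granting this, the first pass removes \(b_1\), since the full-set threshold exceeds \(b_1\) by the forward-scan inequality. It then removes \(b_n\) under the updated threshold \(\theta_n\). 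After that, each pass over \(\{2,\dots,k\}\) keeps \(b_2,\dots,b_{k-1}\) and removes only \(b_k\), because it is scanned last. The new threshold \(\theta_{k-1}\) exceeds \(b_{k-1}\), but \(b_{k-1}\) has already been passed in that sweep. The passes therefore have sizes \(n,\,n-2,\,n-3,\dots,2\), ending when \(\mathcal S=\{2,3\}\) becomes \(\{2\}\) and a final pass confirms it. The total cost is \(\sum_k\Theta(k)=\Theta(n^2)\).
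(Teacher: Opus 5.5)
Your route is the paper's. You trace DTA (one reset at $b_2$, then a singleton active set, so $\Theta(n)$), and you trace NDTA as the prefix construction of Example~\ref{ex:unified-parametric} with an empty suffix. Your DTA part is correct, and so is the pass structure for NDTA (sizes $n,n-2,\dots,2,1$, total $n(n-1)/2+1$); you give more detail than the paper's sketch. However, two NDTA steps fail as written. The reason is that this instance is tight: each relevant inequality holds with slack exactly $D=1$.

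\emph{First}, your $\theta_k$ omits the $-D$. The threshold of $\{2,\dots,k\}$ is $\theta_k=-\frac{1}{k-1}\sum_{r=1}^{k-2}C_r$. Your formula equals $b_{k-1}$ exactly, so the inequality $\theta_k<b_{k-1}$ you rely on is false for it. Applied literally, $b_{n-1}\le\tau$ would delete $b_{n-1}$ already in the first pass.

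\emph{Second}, in the forward scan for $k<n$, using $C_{n-1}\ge C_{k-1}$ still leaves you needing $(k-2)C_{k-2}<(k-1)!+\sum_{r=1}^{k-3}C_r$. This is exactly as tight as your critical case. The bound $C_{k-2}\le 2(k-2)!$ yields it only if $2(k-2)<k-1$, i.e.\ $k\le 3$. Either use $C_{n-1}\ge C_k$ (valid since $k\le n-1$), or note that this is your critical inequality with $n$ replaced by $k$.

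Both points, and your critical case, follow from one identity. Exchange the sums in $C_r=\sum_{\ell=1}^{r}\ell!$ and telescope $\ell\cdot\ell!=(\ell+1)!-\ell!$ to get
\[
\sum_{r=1}^{m}C_r=(m+1)C_m-(m+1)!+1 .
\]
It gives $C_{k-1}+\sum_{r=1}^{k-3}C_r=(k-1)C_{k-2}+1$. Together with $C_{n-1}\ge C_{k-1}$, this settles every forward-scan step and replaces the induction you planned. It also gives $\sum_{r=1}^{k-2}C_r=(k-1)C_{k-3}+1$, i.e.\ $\theta_k=b_{k-1}-\frac{1}{k-1}$ for $k\ge 4$, while $b_k=b_{k-1}-(k-2)!$. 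Hence $b_k<\theta_k<b_{k-1}$ for $4\le k\le n$, and $\theta_3=-\tfrac12\ge b_3$ gives the final deletion of $b_3$. With these corrections, your NDTA trace goes through as stated.
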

\medskip
For DTA, processing \(b_2=1\) triggers a reset. The active set becomes
\(\mathcal S=\{2\}\), with \(\tau=0\), while the index \(1\) is deferred.
Since \(b_i<0\) for all \(i\ge3\), none of the remaining elements
enters the active set, and \(b_1\) is not reinserted. DTA therefore
enters the removal step with a singleton active set and runs in
\(\Theta(n)\) time.

For NDTA, the reset is absent and the large negative value of \(b_1\)
keeps the threshold below every subsequently scanned element. Hence
all \(n\) elements enter the active set. In the first removal iteration, \(b_1\) and \(b_n\) are deleted. Thereafter, \(b_{n-1},\ldots,b_3\) are removed one at a time, one per iteration. Since these iterations scan active
sets of decreasing but linear size, their total cost is
\(\Theta(n^2)\).

\begin{table}[htbp]
\centering
\caption{Average number of resets (\(n_r\)), average number of elements examined
in the removal step by DTA (\(n_1\)) and NDTA (\(n_2\)), and their
cumulative running times in milliseconds (\(t_1\) and \(t_2\)) over
\(10^6\) runs.}
\label{tab:3}
\small
\setlength{\tabcolsep}{20pt}
\begin{tabularx}{\textwidth}{cccccc}
\toprule
$n$ & \multicolumn{1}{c}{$4$} & \multicolumn{1}{c}{$6$} & \multicolumn{1}{c}{$8$} & \multicolumn{1}{c}{$10$} & \multicolumn{1}{c}{$12$} \\
\midrule
Avg(\(n_r\))      & 1.0 & 1.0 & 1.0 & 1.0 & 1.0  \\
Avg(\(n_1\))      & 1.0 & 1.0 & 1.0 & 1.0 & 1.0  \\
Avg(\(n_2\))      & 7.0 & 16.0 & 29.0 & 46.0 & 67.0  \\
$t_1$ (ms) & 51 & 55 & 73 & 84 & 107 \\
$t_2$ (ms) & 59 & 70 & 104 & 137 & 184\\
$t_2/t_1$  & 1.16 & 1.27 & 1.42 & 1.63 & 1.72 \\
\bottomrule
\end{tabularx}
\end{table}

Table~\ref{tab:3} reports the cumulative running times and algorithmic
statistics over \(10^6\) runs on instance~\eqref{eq:explicit-instance}
for different values of \(n\). NDTA is consistently slower than DTA,
and the ratio \(t_2/t_1\) increases with \(n\). In each run, DTA
performs one reset and examines only one element (\(n_1=1\)) in the removal step. For NDTA, the number of examined elements is \(n_2=n(n-1)/2+1\), as predicted by its \(\Theta(n^2)\) complexity. This quadratic behavior relies on a specially constructed instance.
We next consider a removal loop that runs for \(\Theta(n)\) iterations and
removes only a bounded number of elements per iteration. The following
theorem shows that, under these assumptions, the range of the \(b_i\)
values must be large relative to their minimum nonzero separation.

\medskip

\begin{theorem}\label{thm:worst-case-necessary}
Let \(S_t\) be the active set at the beginning of iteration \(t\) of
the removal loop, and let \(m=|S_1|\). Assume \(D>0\) and that the
values \(b_i\), \(i\in S_1\), are not all equal. Define
\[
\tau_t=
\frac{\sum_{i\in S_t}w_i b_i-D}
     {\sum_{i\in S_t}w_i}.
\]
Suppose that, for some \(3\le s\le \left\lfloor m/K\right\rfloor\), each of the first \(s\) iterations removes at least one and at most
\(K\) elements, and \(\Delta_t=\tau_{t+1}-\tau_t>0\), \(t=1,\ldots,s\). Let
\[
\xi=
\frac{\max_{i\in S_1}w_i}{\min_{i\in S_1}w_i},
\quad
\delta=
\min_{\substack{i,j\in S_1\\ b_i\ne b_j}}
|b_i-b_j|,
\quad
L_b=
\max_{i\in S_1}b_i-\min_{i\in S_1}b_i.
\]
Then
\[
\delta
<
L_b
\left(
1+\frac{K\xi}{m-(s-1)K}
\right)
\frac{(K\xi)^{s-3}}
{\displaystyle\prod_{j=2}^{s-2}(m-jK)}.
\]
For \(s=3\), the empty product is understood to be \(1\).
\end{theorem}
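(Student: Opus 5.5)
The plan is to track the threshold increments \(\Delta_t\) across removal iterations. First I show that they decay geometrically at rate roughly \(K\xi/(m-jK)\). Then I exhibit two distinct values \(b_j\ne b_k\) in \(S_1\) whose gap is at most \(\Delta_{s-2}+\Delta_{s-1}\).

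\emph{Step 1: an exact identity for \(\Delta_t\).} Let \(R_t=S_t\setminus S_{t+1}\), so \(1\le|R_t|\le K\), and let \(W_t=\sum_{i\in S_t}w_i\). By definition, \(\sum_{i\in S_t}w_i(b_i-\tau_t)=D\) for every \(t\), exactly as in invariant \eqref{eq:1}. Subtracting this relation for \(t\) and \(t+1\) gives
\[
W_{t+1}\Delta_t=\sum_{i\in R_t}w_i(\tau_t-b_i).
\]
This identity holds regardless of the order of the intra-iteration updates, since it only uses the invariant at the two iteration boundaries. Since \(\Delta_t>0\), some \(j\in R_t\) has \(b_j<\tau_t\).

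Two bracketing facts follow from the algorithm and the monotonicity of \(\tau\) (see the remark after Theorem~\ref{thm:correct-dta}).
\begin{itemize}
\item An element \(i\in R_{t+1}\) survived iteration \(t\). When it was examined, \(b_i\) exceeded the current threshold, which is at least \(\tau_t\). Hence \(b_i>\tau_t\) for every \(i\in R_{t+1}\).
\item We have \(W_{t+1}\ge (m-tK)\min_{S_1}w\), because at most \(tK\) elements have been removed.
\end{itemize}
Plugging these into the identity at level \(t+1\) gives
\[
\Delta_{t+1}<\frac{K\max w\,(\tau_{t+1}-\tau_t)}{W_{t+2}}\le\frac{K\xi}{m-(t+1)K}\,\Delta_t .
\]
For the first factor, I avoid \(\Delta_1\), because \(\tau_1\) may be far below \(\min b\). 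Instead I use \(\tau_2-b_i<\max b-\min b=L_b\). This holds because the invariant forces \(\tau_2<\max_{S_2}b\), and \(b_i\ge\min b\). It yields \(\Delta_2< K\xi L_b/(m-2K)\).

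\emph{Step 2: iterate.} Iterating the decay bound from \(t=2\) up to \(t=s-2\) gives
\[
\Delta_{s-2}\le L_b\,(K\xi)^{s-3}\Big/\prod_{j=2}^{s-2}(m-jK).
\]
For \(s=3\) this reads \(\Delta_1\lesssim L_b\), which is not what I use; that case is handled in Step 3. One more application of the decay bound gives \(\Delta_{s-1}\le \frac{K\xi}{m-(s-1)K}\Delta_{s-2}\). Hence
\[
\Delta_{s-2}+\Delta_{s-1}\le L_b\Bigl(1+\frac{K\xi}{m-(s-1)K}\Bigr)\frac{(K\xi)^{s-3}}{\prod_{j=2}^{s-2}(m-jK)}.
\]

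\emph{Step 3: two distinct values in a short window.}
\begin{itemize}
\item Since \(\Delta_{s-1}>0\), some \(j\in R_{s-1}\) has \(b_j<\tau_{s-1}\). By Step 1, also \(b_j>\tau_{s-2}\).
\item Since \(\Delta_s>0\), some \(k\in R_s\) has \(b_k<\tau_s\) and \(b_k>\tau_{s-1}\).
\end{itemize}
Thus \(b_j<\tau_{s-1}<b_k\), so \(b_j\ne b_k\). Moreover \(0<b_k-b_j<\tau_s-\tau_{s-2}=\Delta_{s-2}+\Delta_{s-1}\), and therefore \(\delta<\Delta_{s-2}+\Delta_{s-1}\), which combined with Step 2 is the claim.

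For \(s=3\) the product is empty and the claimed bound is \(L_b(1+\cdot)>L_b\). Since \(\delta\le L_b\) trivially, this case needs no work.

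The main obstacle is the bookkeeping inside an iteration. Removals within one pass update \(\tau\) immediately, so the quantity compared to \(b_i\) is an intermediate threshold rather than \(\tau_t\). I must check carefully that "survived iteration \(t\)" yields \(b_i>\tau_t\), which uses monotonicity, and that the exact identity for \(W_{t+1}\Delta_t\) is unaffected by this ordering. I also need to confirm the index alignment of the weight bound \(W_{t+2}\ge(m-(t+1)K)\min w\), so that the product runs exactly over \(j=2,\dots,s-2\).
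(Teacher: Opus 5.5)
Your proposal is correct and follows the paper's proof essentially step for step. You use the same exact identity $W_{t+1}\Delta_t=\sum_{i\in R_t}w_i(\tau_t-b_i)$, the same survival argument giving $\Delta_t<\frac{K\xi}{m-tK}\Delta_{t-1}$, and the same pair of witnesses in $R_{s-1}$ and $R_s$ on either side of $\tau_{s-1}$.

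The only deviation is how you start the recursion. You bound $\Delta_2<K\xi L_b/(m-2K)$ directly and settle $s=3$ trivially via $\delta\le L_b$, whereas the paper uses $\Delta_1<L_b$. Your reason for avoiding $\Delta_1$ is unfounded: the hypothesis $\Delta_1>0$ together with the identity already forces $\tau_1>\bar b_1\ge\min_{S_1}b$. Your route is nonetheless equally valid.
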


\begin{proof}
Let
\[
R_t=S_t\setminus S_{t+1},\quad
W_t=\sum_{i\in S_t}w_i,\quad
G_t=\sum_{i\in R_t}w_i,
\]
and define
\[
\bar b_t=\frac{\sum_{i\in R_t}w_i b_i}{G_t}.
\]
Since \(S_t=S_{t+1}\cup R_t\), the threshold identities for \(S_t\)
and \(S_{t+1}\) give \(W_{t+1}(\tau_{t+1}-\tau_t)
=
G_t(\tau_t-\bar b_t)\), thus
\begin{equation}\label{eq:wc-delta-identity}
\Delta_t
=
\frac{G_t}{W_{t+1}}(\tau_t-\bar b_t).
\end{equation}

For \(t\ge2\), every element of \(R_t\) survived iteration \(t-1\).
Since the threshold is nondecreasing during an iteration, \(b_i>\tau_{t-1}\), \(i\in R_t\). Moreover, \(\Delta_t>0\) and \eqref{eq:wc-delta-identity} imply
\(\bar b_t<\tau_t\). Hence
\[
\tau_{t-1}<\bar b_t<\tau_t,\quad 0<\tau_t-\bar b_t<\Delta_{t-1}.
\]
Using \eqref{eq:wc-delta-identity}, we obtain
\begin{equation}\label{eq:wc-delta-contraction}
\Delta_t<
\frac{G_t}{W_{t+1}}\Delta_{t-1}.
\end{equation}
At most \(K\) elements are removed in iteration \(t\), so \(G_t\le K\max_{i\in S_1}w_i\). After the first \(t\) iterations, at least \(m-tK\) elements remain.
Therefore, \(W_{t+1}\ge
(m-tK)\min_{i\in S_1}w_i\). It follows from \eqref{eq:wc-delta-contraction} that
\begin{equation}\label{eq:wc-delta-recursion}
\Delta_t<
\frac{K\xi}{m-tK}\Delta_{t-1},
\quad t\ge2.
\end{equation}
Iterating \eqref{eq:wc-delta-recursion} gives
\begin{equation}\label{eq:wc-delta-iteration}
\Delta_{s-1}
<
\frac{K\xi}{m-(s-1)K}\Delta_{s-2},\quad
\Delta_{s-2}
<
\Delta_1
\frac{(K\xi)^{s-3}}
{\displaystyle\prod_{j=2}^{s-2}(m-jK)}.
\end{equation}
For each \(t\ge2\), all elements of \(R_t\) exceed
\(\tau_{t-1}\), while their weighted average is smaller than
\(\tau_t\). Hence, there exists \(i_t\in R_t\) such that \(\tau_{t-1}<b_{i_t}<\tau_t\).
In particular, \(b_{i_{s-1}}<\tau_{s-1}<b_{i_s}\), so these two values are distinct, and
\[
\delta
\le b_{i_s}-b_{i_{s-1}}
<
\tau_s-\tau_{s-2}
=
\Delta_{s-2}+\Delta_{s-1}.
\]
Using \eqref{eq:wc-delta-iteration}, we obtain
\begin{equation}\label{eq:wc-delta-gap}
\delta
<
\Delta_{s-2}
\left(
1+\frac{K\xi}{m-(s-1)K}
\right).
\end{equation}
Finally, \(\Delta_1>0\) and \eqref{eq:wc-delta-identity} imply
\(\tau_1>\bar b_1\ge\min_{i\in S_1}b_i\). Since \(D>0\), \(\tau_2
<
\max_{i\in S_2}b_i
\le
\max_{i\in S_1}b_i\).
Thus \(\Delta_1=\tau_2-\tau_1<L_b\).
Combining this inequality with \eqref{eq:wc-delta-iteration} and \eqref{eq:wc-delta-gap} proves the result.
\end{proof}

Theorem~\ref{thm:worst-case-necessary} gives a necessary condition for a long removal phase with bounded deletions and strictly positive threshold increments. If \(K\) and \(\xi\) are bounded and \(s=\Theta(m)\), then the upper bound on \(\delta/L_b\) decays as \(\exp(-\Theta(m\log m))\). Hence, some distinct \(b_i\) values must differ by a super-exponentially small fraction of their overall range. For large \(m\), fixed-precision arithmetic cannot resolve such differences, making this quadratic pattern difficult to reproduce numerically.

\begin{remark}
Consider the uniformly distributed data used in Section~\ref{sec.5}.
Suppose that the removal step begins with \(m=10^6\) active elements.
Since \(w_i\in[10,25]\), \(\xi\le2.5\). Moreover, \(b_i\in[1,15]\), so \(L_b\le14\). Because the \(b_i\) values
are stored in IEEE~754 double precision, any two distinct values in
this interval differ by at least
\[
\delta\ge2^{-52}\approx2.22\times10^{-16}.
\]

Suppose that, in each of \(s\) consecutive iterations, the threshold
increment is positive and between one and \(K=10\) elements are
removed in each iteration. For \(s=5\), Theorem~\ref{thm:worst-case-necessary} gives
\[
\delta<
14\left(1+\frac{25}{10^6-40}\right)
\frac{25^2}{(10^6-20)(10^6-30)}
\approx8.75\times10^{-9}.
\]
For \(s=7\), it gives
\[
\delta<
14\left(1+\frac{25}{10^6-60}\right)
\frac{25^4}
{(10^6-20)(10^6-30)(10^6-40)(10^6-50)}
\approx5.47\times10^{-18}.
\]
The latter contradicts \(\delta\ge2^{-52}\). Hence, with the data representation used in these
experiments, at most six consecutive iterations can satisfy the stated
conditions.
\end{remark}

\subsection{Practical Impact on Random Instances}

We implemented two versions of the algorithm under identical hardware and implementation settings: the original version with reset (DTA) and the no-reset variant (NDTA). All experiments were repeated 100 times on exponentially distributed random data. As reported in Table~\ref{table 1}, for all tested problem sizes (\(n=5\times 10^4,\dots,10^7\)), NDTA consistently runs faster than DTA. The runtime ratio \(t_2/t_1\) ranges from \(0.75\) to \(0.94\), showing that the reset mechanism does not improve computational efficiency on this class of instances and instead introduces additional overhead. Even when \(n=10^7\), NDTA remains about \(6.4\%\) faster than DTA, and this behavior is stable over 100 independent runs. The reset statistics further support this observation. The average number of resets is only \(1.2\sim 1.3\), with no clear dependence on the problem dimension. This indicates that for exponentially distributed data, the reset condition is triggered only rarely, so the active-set evolution is already well behaved without this mechanism. In such a situation, the extra work associated with reset and Step 3 appears to be the main source of the inferior runtime of DTA.

To better understand this behavior, Table~\ref{table 2} compares the evolution of the active-set size \(|\mathcal S|\) in the two versions for the case \(n=10^6\), where reset is triggered three times. The difference between the two active sets is very small throughout the computation: after Step~2, their sizes differ by only four elements, and after Step~3 of DTA, the difference is reduced to one element. Thereafter, the two versions produce exactly the same active set in every iteration of the removal step. These observations indicate that, on this class of instances, the reset mechanism has only a limited effect on the active set evolution, while its maintenance cost outweighs its practical benefit.

\begin{table}[htbp]
	\centering
	\caption{Average, maximum, and minimum values for the number of resets (\(n_r\)) and running times (ms) of DTA($t_1$) and NDTA ($t_2$) under exponential distribution.}
	\label{table 1}
	\small
	\setlength{\tabcolsep}{12pt} 
	\begin{tabular}{ccccccc}
		\toprule
		$n$ & \multicolumn{1}{c}{$5\times 10^4$} & \multicolumn{1}{c}{$10^5$} & \multicolumn{1}{c}{$5\times 10^5$} & \multicolumn{1}{c}{$10^6$} & \multicolumn{1}{c}{$5\times 10^6$} & \multicolumn{1}{c}{$10^7$} \\
		\midrule
		Avg(\(n_r\))     & 1.2 & 1.2 & 1.3 & 1.2 & 1.2 & 1.2 \\
		Max(\(n_r\))      & 4.0 & 5.0 & 4.0 & 4.0 & 4.0 & 5.0 \\
		Min(\(n_r\))      & 0.0 & 0.0 & 0.0 & 0.0 & 0.0 & 0.0 \\
		$t_1$ (ms)   & 0.670 & 1.290 & 5.940 & 12.730 & 58.580 & 121.660 \\
		$t_2$ (ms)   & 0.500 & 1.140 & 5.450 & 12.080 & 54.760 & 114.650 \\
		$t_2/t_1$    & 0.75 & 0.88 & 0.92 & 0.95 & 0.93 & 0.94 \\
		\bottomrule
	\end{tabular}
\end{table}

\begin{table}[htbp]
	\centering
	\caption{Comparison of active set sizes after each stage between DTA and NDTA under exponential distribution.}
	\label{table 2}
	\small
	\setlength{\tabcolsep}{30pt}
	\begin{tabularx}{\textwidth}{cXc}
		\toprule
		Stage & DTA & NDTA \\
		\midrule
		After Step 2       & 99837 & 99841 \\
		After Step 3       & 99840 & 99841 \\
		After Step 4, iteration 1 & 94417 & 94417 \\
		After Step 4, iteration 2 & 94263 & 94263 \\
		After Step 4, iteration 3 & 94263 & 94263 \\
		\bottomrule
	\end{tabularx}
\end{table}

In summary, the role of the reset mechanism is inherently instance-dependent. It may become inactive or offer only marginal practical improvement on certain data, and in our auxiliary numerical tests the no-reset variant is even observed to be slightly faster in such cases, presumably because it avoids the small overhead associated with maintaining reset operations. Yet the reset mechanism remains crucial for preventing the quadratic worst-case behavior exhibited by the no-reset variant on certain instances. Its primary contribution is therefore to improve the robustness of the algorithm.

\section{Numerical Experiments}\label{sec.5}

Section~\ref{sect.NRP} reports numerical results for the zero-lower-bound weighted minimum variance allocation problem (ZWMVA), which is equivalent to problem~\eqref{P}. Section~\ref{sect.NC} compares DTA (NDTA) with WMVA~\cite{17}, Secant~\cite{9}, Variable Fixing~\cite{12}, Newton~\cite{10}, Median Search~\cite{11}, Heap~\cite{33}, and Sort~\cite{32}. Section~\ref{sect.SP} further validates the proposed algorithms for the continuous relaxation of the sensor placement problem (CSP)~\cite{Frangioni11}.

All experiments were carried out on a desktop with an Intel(R) Core(TM) i7-7700 CPU at 3.60GHz and 32 GB RAM, running Windows 10. The code was implemented in C and compiled with MSVC 19.44 using the optimization flags \texttt{/O2 /GL /arch:AVX2 /fp:fast}. Execution times were measured with \texttt{QueryPerformanceCounter} and are reported in milliseconds.

Table~\ref{tab:complexity} summarizes the worst-case, expected, and practical complexity of the algorithms. Following~\cite{18}, the practical complexity refers to the observed growth rate of running time with respect to the dimension of the problem in typical instances. As indicated in the table, although DTA has worst-case complexity \(\Theta(n^2)\), its empirical running time scales essentially linearly.
\begin{table}[htbp]
    \centering
    \caption{Worst-case, expected, and practical complexity of the algorithms as functions of the data size \(n\). Here, Sort denotes quicksort with random pivot selection (see~\cite{18}).}
    \label{tab:complexity}
    \small
    \begin{tabularx}{\textwidth}{cccc}
        \toprule
        Algorithm & Worst case complexity & Expected complexity & Observed in practice \\
        \midrule
        NDTA & $\Theta(n^2)$ & -- & $\Theta(n)$ \\
        DTA & $\Theta(n^2)$ & -- & $\Theta(n)$ \\
        WMVA & $\Theta(n^2)$ & -- & $\Theta(n)$ \\
        Secant & -- & -- & $\Theta(n)$ \\
        Variable Fixing & $\Theta(n^2)$ & -- & $\Theta(n)$ \\	
        Newton & $\Theta(n^2)$ & -- & $\Theta(n)$ \\        	
        Median Search & $\Theta(n)$ & $\Theta(n)$ & $\Theta(n)$ \\
        Heap & $\Theta(n + n \log n)$ & -- & $\Theta(n + n \log n)$ \\
        Sort & $\Theta(n^2)$ & $\Theta(n \log n)$ & $\Theta(n \log n)$ \\
        \bottomrule
    \end{tabularx}
\end{table}

\subsection{Numerical Results for the Zero-Lower-Bound Weighted Minimum Variance Allocation Problem}\label{sect.NRP}

As in~\cite{17}, we consider two representative data distributions: uniform and exponential. 
The instances are generated as follows:
\begin{itemize}
    \item[(a)] \textbf{Uniform:} \(w_i \sim \mathbb{U}[10,25]\), \(b_i \sim \mathbb{U}[1,15]\), and \(c\in\left(0,\sum_i w_i b_i\right)\).
    \item[(b)] \textbf{Exponential:}
    \begin{itemize}[label=\textbullet]
        \item \(w_i\) is generated by \(-\log(1-u)\), where \(u\sim\mathbb{U}(0,1)\).
        \item \(b_i\) is generated by \(-\log(1-u)\), where \(u\sim\mathbb{U}(0,1)\).
        \item \(c = \frac{r}{4.6}\sum_i w_i b_i\), where \(r = -\log(1-u)\) with \(u\sim\mathbb{U}(0,1)\) such that \(r<4.6\).
    \end{itemize}
\end{itemize}
\smallskip
Here, the problem dimension \(n\) ranges from \(5\times 10^4\) to \(10^7\).

\subsubsection{Numerical Comparisons}\label{sect.NC}

We next compare the numerical performance of the algorithms, focusing on illustrating the iteration process and the computational efficiency. As the first step, Table~\ref{table 4} reports the evolution of the active set for DTA, NDTA, WMVA, Variable Fixing, and Newton on a representative instance under the exponential distribution. The remaining methods are omitted because they do not maintain active sets and are therefore not iterative in this sense. Although the stopping criteria differ, each iteration essentially amounts to processing the current active set. The table shows that NDTA and DTA terminate in three iterations, whereas WMVA, Variable Fixing, and Newton require six. In addition, Variable Fixing, WMVA, and Newton produce identical active set sizes at every iteration, consistent with \cite[Proposition~1]{10} and \cite[Theorem~3.1]{22}. However, these methods differ substantially in per-iteration cost and overall running time, as will be seen more clearly in the timing results below.

\begin{table}[htbp]
	\centering
	\caption{Active set size \(|\mathcal{S}|\) after each iteration for a representative instance with \(n=10^6\) under the exponential distribution.}
	\label{table 4}
	\small
	\setlength{\tabcolsep}{12pt}
	\begin{tabularx}{\textwidth}{cccccc}
		\toprule
		Iteration & NDTA & DTA & WMVA & Variable Fixing & Newton \\
		\midrule
		0 & 695181 & 695178 & 1000000 & 1000000 & 1000000  \\
		1 & 563118 & 563117 & 674969 & 674969 & 674969     \\
		2 & 479502 & 479502 & 521421 & 521421 & 521421     \\
		3 & 479502 & 479502 & 505601 & 505601 & 505601     \\
		4 &        &        & 480244 &  480244 & 480244     \\
		5 &        &        & 479502 & 479502  & 479502      \\
		6  &       &        & 479502 & 479502 & 479502       \\
		\bottomrule
	\end{tabularx}
\end{table}



\begin{sidewaystable}[htbp]
    \centering
    \renewcommand\arraystretch{1.1}
\caption{Average, maximum, and minimum iterations and running times of NDTA, DTA, WMVA, Secant, Variable Fixing, Newton, Median Search, Heap, and Sort under the uniform distribution.}
    \label{table 5}
    \setlength{\tabcolsep}{3pt} 
    \begin{tabular}{c ccc ccc | ccc ccc | ccc ccc}
        \toprule        
        Dimension& \multicolumn{3}{c}{Iteration} & \multicolumn{3}{c}{Time (ms)} 
        & \multicolumn{3}{c}{Iteration} & \multicolumn{3}{c}{Time (ms)} 
        & \multicolumn{3}{c}{Iteration} & \multicolumn{3}{c}{Time (ms)} \\
        \cmidrule(lr){2-4} \cmidrule(lr){5-7} \cmidrule(lr){8-10} \cmidrule(lr){11-13} \cmidrule(lr){14-16} \cmidrule(lr){17-19}       
        $n$& Avg. & Max. & Min. & Avg. & Max. & Min. & Avg. & Max. & Min. & Avg. & Max. & Min. & Avg. & Max. & Min. & Avg. & Max. & Min. \\
        \midrule
          & \multicolumn{5}{c}{NDTA} && \multicolumn{5}{c}{DTA} && \multicolumn{5}{c}{WMVA}& \\     
        $5\times10^4$   & 4.6 & 5 & 4 & 0.541 & 1.234 & 0.518 
        & 4.6 & 5 & 4 & 0.636 & 1.313 & 0.534 
        & 4.0 & 5 & 4 & 0.818 & 1.436 & 0.651 \\
        $10^5$          & 4.7 & 6 & 4 & 1.203 & 2.118 & 1.041 
        & 4.7 & 6 & 4 & 1.240 & 2.183 & 1.084 
        & 4.1 & 5 & 4 & 1.493 & 2.774 & 1.309 \\
        $5\times 10^5$  & 4.8 & 6 & 4 & 7.125 & 11.063& 6.037 
        & 4.8 & 6 & 4 & 7.490 & 11.611& 6.286 
        & 4.1 & 5 & 4 & 9.411 & 13.755& 8.655 \\
        $10^6$          & 5.0 & 6 & 4 & 14.414& 17.760& 13.241
        & 5.0 & 6 & 4 & 15.375& 18.412& 13.857
        & 4.2 & 6 & 4 & 18.977& 27.934& 17.899 \\
        $5\times 10^6$  & 5.0 & 6 & 4 & 73.589& 87.414& 70.581
        & 5.0 & 6 & 4 & 77.442& 89.717& 71.877
        & 4.1 & 5 & 4 & 99.752& 122.270& 91.407 \\
        $10^7$   & 4.7 & 6 & 4 & 150.827& 179.763& 143.539
        & 4.7 & 6 & 4 & 160.411& 187.252& 148.531
        & 4.1 & 5 & 4 & 210.517& 273.885& 189.526 \\
        \midrule
        & \multicolumn{5}{c}{Secant} && \multicolumn{5}{c}{Variable Fixing} &&\multicolumn{5}{c}{Newton}& \\
        $5\times10^4$   & 7.0 & 8 & 7 & 0.885 & 2.686 & 0.633 
        & 4.0 & 5 & 4 & 1.105 & 2.820 & 0.870 
        & 4.0 & 5 & 4 & 1.234 & 3.930 & 0.920 \\
        $10^5$          & 7.0 & 7 & 7 & 1.798 & 5.724& 1.308 
        & 4.1 & 5 & 4 & 2.449 & 5.250& 1.842 
        & 4.1 & 5 & 4 & 2.397 & 4.677 & 1.955 \\
        $5\times 10^5$  & 7.1 & 8 & 7 & 11.958& 15.999& 9.969 
        & 4.1 & 5 & 4 & 14.851& 19.443& 12.579
        & 4.1 & 5 & 4 & 15.891& 19.161& 12.812 \\
        $10^6$          & 8.0 & 8 & 7 & 24.187& 31.129& 21.080
        & 4.2 & 6 & 4 & 29.938& 34.983& 26.771
        & 4.2 & 6 & 4 & 31.207& 35.471& 27.717 \\
        $5\times 10^6$  & 7.0 & 7 & 7 & 113.249& 144.77 & 105.599
        & 4.1 & 5 & 4 & 152.714& 179.747& 140.714 
        & 4.1 & 5 & 4 & 160.400 &193.869 & 147.772\\
        $10^7$  & 7.1 & 8 & 7 & 236.458& 299.149&209.504
        & 4.1 & 5 & 4 & 320.183& 388.774 & 288.145 
        & 4.1 & 5 & 4 & 342.048& 498.097& 306.078 \\
        \midrule
        & \multicolumn{5}{c}{Median Search} && \multicolumn{5}{c}{Heap} && \multicolumn{5}{c}{Sort}& \\
        $5\times10^4$   & 16.5 & 17 & 16 & 1.402 & 1.711 & 1.264 
        & --- & --- & --- & 3.261 & 8.226 & 2.583 
        & --- & --- & --- & 11.596 & 28.494 & 7.272 \\
        $10^5$       & 17.5 & 18 & 17 & 2.780 & 3.483 & 1.791 
        & --- & --- & --- & 7.006 & 15.066& 5.816 
        & --- & --- & --- & 22.077 & 32.397 & 16.217 \\
        $5\times 10^5$  & 19.9 & 20 & 19 & 15.278& 23.850& 12.240
        & --- & --- & --- & 50.697& 58.255& 45.872
        & --- & --- & --- & 104.744&141.558& 85.336 \\
        $10^6$          & 20.9 & 21 & 20 & 36.294& 42.544& 31.153
        & --- & --- & --- & 133.119&161.741&125.259
        & --- & --- & --- & 210.773&238.237&181.791 \\
        $5\times 10^6$  & 23.2 & 24 & 23 & 173.012& 225.999& 154.011
        
        & --- & --- & --- & 1017.564& 1248.571& 993.454 
       & --- & --- & --- & 1178.018 & 1332.870& 1083.569 \\
        $10^7$  & 24.2 & 25 & 24 & 364.155& 381.227& 342.982
        
        & --- & --- & --- & 2192.294& 2491.026& 2061.062
        & --- & --- & --- & 2290.576& 2528.057& 2104.854 \\
        \bottomrule
    \end{tabular}
\end{sidewaystable}

\begin{sidewaystable}[htbp]
    \centering
    \renewcommand\arraystretch{1.1}
    \caption{Average, maximum, and minimum iterations and running times of NDTA, DTA, WMVA, Secant, Variable Fixing, Newton, Median Search, Heap, and Sort under the exponential distribution.}
    \label{table 6}
    \setlength{\tabcolsep}{3pt} 
    \begin{tabular}{c ccc ccc | ccc ccc | ccc ccc}
        \toprule        
        Dimension& \multicolumn{3}{c}{Iteration} & \multicolumn{3}{c}{Time (ms)} 
        & \multicolumn{3}{c}{Iteration} & \multicolumn{3}{c}{Time (ms)} 
        & \multicolumn{3}{c}{Iteration} & \multicolumn{3}{c}{Time (ms)} \\
        \cmidrule(lr){2-4} \cmidrule(lr){5-7} \cmidrule(lr){8-10} \cmidrule(lr){11-13} \cmidrule(lr){14-16} \cmidrule(lr){17-19}       
        $n$& Avg. & Max. & Min. & Avg. & Max. & Min. & Avg. & Max. & Min. & Avg. & Max. & Min. & Avg. & Max. & Min. & Avg. & Max. & Min. \\
        \midrule
          & \multicolumn{5}{c}{NDTA} && \multicolumn{5}{c}{DTA} && \multicolumn{5}{c}{WMVA}& \\        
        $5\times10^4$   & 4.7 & 7 & 3 & 0.234 & 0.778 & 0.227 
        & 4.7 & 7 & 3 & 0.313 & 0.934 & 0.242 
        & 3.7 & 7 & 2 & 0.473 & 0.916 & 0.396 \\
        $10^5$          & 4.3 & 5 & 4 & 0.646 & 2.289 & 0.568 
        & 4.3 & 5 & 4 & 0.687 & 2.435 & 0.604 
        & 3.3 & 4 & 3 & 0.931 & 3.433 & 0.840 \\
        $5\times 10^5$  & 3.8 & 6 & 2 & 5.102 & 8.971 & 4.630 
        & 3.8 & 6 & 2 & 5.747 & 9.756 & 4.926 
        & 3.8 & 7 & 2 & 7.714 & 18.366 & 6.702 \\
        $10^6$          & 4.0 & 6 & 2 & 8.118 & 11.143 & 7.053 
        & 4.0 & 6 & 2 & 8.636 & 12.046 & 7.508 
        & 3.7 & 6 & 2 & 10.918 & 15.314 & 9.426 \\
        
        $5\times 10^6$  & 4.5 & 6 & 2& 64.772& 94.675& 33.468
        & 4.5 & 6 & 2 & 68.157& 97.146& 38.265
        & 4.1 & 6 & 2 & 94.132 & 138.262& 41.500 \\
        $10^7$  & 4.6 & 6 & 3 & 122.337& 182.734& 91.114
        & 4.6 & 6 & 3 & 127.435& 187.305& 94.303
        & 3.6 & 5 & 2 & 170.706& 312.851& 86.644\\
        \midrule
        & \multicolumn{5}{c}{Secant} && \multicolumn{5}{c}{Variable Fixing} &&\multicolumn{5}{c}{Newton}& \\
        $5\times10^4$   & 7.6 & 10 & 5 & 0.761 & 2.914 & 0.560 & 3.7 & 7 & 2 & 0.520 & 1.227 & 0.396 & 3.7 & 7 & 2 & 0.558 & 1.143 & 0.430 \\
        $10^5$          & 7.1 & 8 & 6 & 1.657 & 4.267 & 1.341 & 3.3 & 4 & 3 & 1.489 & 4.360 & 1.181 & 3.3 & 4 & 3 & 1.502 & 3.807 & 1.241 \\
        $5\times 10^5$  & 7.6 & 10 & 5 & 11.638 & 17.836 & 9.883 & 3.8 & 7 & 2 & 12.579 & 25.022 & 10.630 & 3.8 & 7 & 2 & 13.348 & 25.000 & 11.323 \\
        $10^6$          & 7.6 & 10 & 5 & 16.419 & 23.800 & 15.001 & 3.7 & 6 & 2 & 17.964 & 25.540 & 16.149 & 3.7 & 6 & 2 & 19.456 & 27.994 & 17.475 \\
        $5\times 10^6$  & 7.8 & 9 & 5 & 131.179& 169.300& 81.077
        & 4.1 & 6 & 2 & 150.921& 210.582& 79.663
        & 4.1 & 6 & 2 & 156.594  & 213.588& 75.037 \\
        $10^7$  & 7.6 & 9 & 6 & 252.524& 330.387& 179.250
         & 3.6 & 5 & 2 & 263.168 & 394.830& 144.784
        & 3.6 & 5 & 2 & 280.930& 429.609 & 207.467\\
        \midrule
        & \multicolumn{5}{c}{Median Search} && \multicolumn{5}{c}{Heap} && \multicolumn{5}{c}{Sort}& \\
        $5\times10^4$   & 16.4 & 17 & 16 & 3.298 & 7.223 & 2.634         
        & --- & --- & --- & 7.485 & 10.795 & 6.239 
        & --- & --- & --- & 12.434 & 25.886 & 7.482 \\
        $10^5$      & 17.5 & 18 & 17 & 5.790 & 7.716 & 5.146        
        & --- & --- & --- & 12.010 & 15.251 & 10.731 
        & --- & --- & --- & 22.383 & 42.269 & 15.412 \\
        $5\times 10^5$  & 19.9 & 20 & 19 & 34.020 & 39.551 & 31.718 
        
        & --- & --- & --- & 88.657 & 98.782 & 74.511 
        & --- & --- & --- & 104.424& 151.578& 83.534 \\
        $10^6$          & 20.9 & 21 & 20 & 65.906 & 75.587 & 62.644       
        & --- & --- & --- & 185.615 & 249.708 & 162.390 
        & --- & --- & --- & 208.854& 303.230& 178.191 \\
        
        $5\times 10^6$  & 23.3  & 24 & 23 & 316.152& 343.961& 290.865
        
        & --- & --- & --- & 1023.910& 1254.082& 937.081
        & --- & --- & --- & 1108.785  & 1363.138& 1036.507 \\
        $10^7$  &24.2  & 25 & 24 & 634.588& 681.517& 617.730 
        
        & --- & --- & --- &   1904.134  &  2164.312& 1879.974
        & --- & --- & --- &  2340.354&  2478.503 &  2218.861\\
        \bottomrule
    \end{tabular}
\end{sidewaystable}


The numerical results are reported in Tables~\ref{table 5} and~\ref{table 6}. For each dimension of the problem, the tables list the iteration counts (average, maximum, and minimum) and the running times over \(100\) random instances. To obtain stable timing measurements, each instance was solved repeatedly $\max\{10,10^7/n\}$ times, and the reported time is the average per instance. The notion of an iteration depends on the algorithm. For DTA and NDTA, one iteration corresponds to one complete pass of Step~4. For WMVA, Secant, Variable Fixing, and Newton, each evaluation of \(g(\tau)\) in~\eqref{eq:xgtau} is counted as one iteration. For Median Search, one iteration corresponds to one median-search operation. Sort and Heap do not admit a meaningful iteration count and are therefore compared only in terms of running time. Although Median Search has favorable theoretical complexity $\Theta(n)$, its observed running time is relatively large in our setting, likely due to the overhead of repeated scans and median selection.

Tables~\ref{table 5} and~\ref{table 6} show that DTA and NDTA are highly competitive across all dimensions tested and in both data distributions. For example, under the exponential distribution with \(n=10^7\), the average running times are \(127.435\)ms for DTA and \(122.337\)ms for NDTA, compared with \(170.706\)ms for WMVA~\cite{17}, \(252.524\)ms for Secant, \(263.168\)ms for Variable Fixing, \(634.588\)ms for Median Search, and \(280.930\)ms for Newton. The advantage over Heap and Sort is even more pronounced, with running times of \(1.904\)s and \(2.340\)s, respectively. The same overall trend is observed under the uniform distribution. The tables also support the practical linear-time behavior of DTA and NDTA. Under the uniform distribution, as \(n\) increases from \(5\times 10^4\) to \(10^7\), the average running time grows from \(0.636\)ms to \(160.411\)ms for DTA and from \(0.541\)ms to \(150.827\)ms for NDTA, indicating near-linear scaling in both cases. Moreover, NDTA is consistently slightly faster than DTA on these instances, suggesting that the reset operation does not provide an additional practical benefit here and may instead introduce a small overhead. This observation indicates that in such settings, DTA can be simplified to NDTA to achieve slightly better practical efficiency.

In terms of iteration counts, WMVA, Newton, and Variable Fixing have the same number of iterations, typically requiring about \(3\) to \(4\) iterations on average. Both DTA and NDTA require slightly more iterations, usually about \(4\) to \(5\), with a maximum of \(7\), but this number remains essentially stable as \(n\) increases. By contrast, Secant typically requires about \(7\) to \(8\) iterations, and Median Search about \(17\) to \(24\) iterations. It is worth highlighting that NDTA and DTA achieve the shortest running time in all test scenarios despite requiring slightly more iterations than WMVA, Newton, and Variable Fixing. This reflects its substantially lower per-iteration cost: each NDTA or DTA iteration involves only simple scalar comparisons and updates of the dynamic threshold, whereas the competing methods require more expensive operations such as derivative evaluation, set updates, or search procedures.

Overall, DTA and NDTA exhibit stable and efficient performance under both uniform and exponential distributions, with running times that grow approximately linearly in the size of the problem. Across all comparisons, they provide the best overall computational efficiency among the tested methods.

\subsection{Numerical Results for Sensor Placement Problem}\label{sect.SP}

\begin{sidewaystable}[htbp]
    \centering
    \renewcommand\arraystretch{1.1}
    \caption{Numerical results for the sensor placement problem: numbers of failed runs, together with the average, maximum and minimum iterations and running times of NDTA, DTA, WMVA, FPA2, Newton, Variable Fixing, Median Search, and Secant.}
    \label{tab:SP}
    \setlength{\tabcolsep}{3 pt}
    \footnotesize
    \begin{tabular}{c ccc ccc c | ccc ccc c}
        \toprule
        \multicolumn{1}{c}{Dimension} & \multicolumn{3}{c}{Iteration} & \multicolumn{3}{c}{Time (ms)} & \multicolumn{1}{c}{Failures} & \multicolumn{3}{c}{Iteration} & \multicolumn{3}{c}{Time (ms)} & \multicolumn{1}{c}{Failures} \\
        \cmidrule(lr){2-4} \cmidrule(lr){5-7} \cmidrule(lr){9-11} \cmidrule(lr){12-14}
        $n$& Avg. & Max. & Min. & Avg. & Max. & Min. & --& Avg. & Max. & Min. & Avg. & Max. & Min. &-- \\
                \midrule
         & \multicolumn{5}{c}{NDTA} &&& \multicolumn{5}{c}{DTA} &\\
        $10^4$-A   & 1.0 & 1 & 1 & 0.023 & 0.037 & 0.022 & 0 
        & 1.0 & 1 & 1 & 0.033 & 0.045 & 0.024 & 0 \\
        $10^4$-B   & 1.5 & 2 & 1 & 0.034 & 0.062 & 0.031 & 0 
        & 1.5 & 2 & 1 & 0.049 & 0.071 & 0.035 & 0 \\
        $10^5$-A  & 1.0 & 1 & 1 & 0.173 & 0.232 & 0.150 & 0 
        & 1.0 & 1 & 1 & 0.204 & 0.269 & 0.167 & 0 \\
        $10^5$-B  & 1.9 & 2 & 1 & 0.209 & 0.273 & 0.190 & 0 
        & 1.9 & 2 & 1 & 0.266 & 0.348 & 0.221 & 0 \\
        $10^6$-A & 1.2 & 2 & 1 & 1.977 & 2.416 & 1.443 & 0 
        & 1.2 & 2 & 1 & 2.222 & 2.795 & 1.826 & 0 \\
        $10^6$-B & 1.7 & 2 & 1 & 2.178 & 2.406 & 1.870 & 0 
        & 1.7 & 2 & 1 & 2.420 & 2.673 & 2.078 & 0 \\
        $5\times10^6$-A & 1.0 & 1 & 1 & 8.710 & 9.278& 7.747 & 0 
        & 1.0 & 1 & 1 & 9.678 & 11.420& 8.667 & 0 \\
        $5\times10^6$-B & 1.2 & 2 & 1 & 9.675 & 10.240& 7.561 & 0 
        & 1.2 & 2 & 1 & 10.393& 12.489& 8.845 & 0 \\
        $10^7$-A& 1.1 & 2 & 1 & 16.935& 17.865& 16.361& 0 
        & 1.1 & 2 & 1 & 18.594& 19.406& 17.845& 0 \\
        $10^7$-B& 1.1 & 2 & 1 & 17.943& 19.516& 16.495& 0 
        & 1.1 & 2 & 1 & 19.826& 22.795& 18.217& 0 \\
        \midrule
        & \multicolumn{5}{c}{WMVA} &&& \multicolumn{5}{c}{FPA2} & \\
        $10^4$-A   & 6.1 & 7 & 5 & 0.130 & 0.205 & 0.098 & 0 
        & 3.9 & 5 & 3 & 0.089 & 0.118 & 0.077 & 0 \\
        $10^4$-B   & 13.4& 15& 11& 0.098 & 0.130 & 0.081 & 0 
        & 7.4 & 8 & 6 & 0.167 & 0.346 & 0.092 & 0 \\
        $10^5$-A  & 5.7 & 7 & 5 & 1.142 & 1.531 & 0.880 & 0 
        & 4.3 & 5 & 3 & 0.813 & 1.014 & 0.739 & 0 \\
        $10^5$-B  & 14.4& 17& 9 & 0.857 & 1.050 & 0.699 & 0 
        & 9.0 & 11& 5 & 1.274 & 1.556 & 1.087 & 0 \\
        $10^6$-A & 6.2 & 8 & 5 & 16.609 & 20.399& 12.946& 0 
        & 4.3 & 5 & 4 & 7.961 & 9.308 & 6.745 & 0 \\
        $10^6$-B & 17.5& 20& 10& 11.088 & 12.981& 9.691 & 0 
        & 10.3& 13& 8 & 13.844& 15.423& 11.700& 1 \\
        $5\times10^6$-A & 4.8 & 7 & 3 & 76.425 & 85.246& 68.146& 2 
        & 0.0 & 0 & 0 & --- & --- & --- & 10 \\
        $5\times10^6$-B & 20.7& 22& 17& 51.439 & 58.402& 47.012& 0 
        & 11.0& 14& 8 & 62.546& 67.567& 56.539& 2 \\
        $10^7$-A& 5.4 & 8 & 4 & 148.900 & 193.709&128.837& 1 
        & 0.0 & 0 & 0 & --- & --- & --- & 10 \\
        $10^7$-B& 18.6& 21& 15 & 102.438 &115.805& 95.374& 1 
        & 13.1& 16& 9 & 137.523 & 144.650 & 108.603 & 1 \\
        \midrule
         & \multicolumn{5}{c}{Newton} &&& \multicolumn{5}{c}{Variable Fixing} & \\
        $10^4$-A   & 5.6 & 6 & 4 & 0.122 & 0.176 & 0.103 & 0 
        & 5.6 & 6 & 4 & 0.126 & 0.146 & 0.091 & 0 \\
        $10^4$-B   & 4.3 & 6 & 3 & 0.215 & 0.488 & 0.132 & 0 
        & 4.3 & 6 & 3 & 0.195 & 0.448 & 0.125 & 0 \\
        $10^5$-A  & 6.0 & 8 & 3 & 1.128 & 1.430 & 0.971 & 0 
        & 6.0 & 8 & 3 & 0.837 & 1.050 & 0.674 & 0 \\
        $10^5$-B  & 5.3 & 7 & 4 & 1.608 & 1.995 & 1.420 & 0 
        & 5.3 & 7 & 4 & 1.190 & 1.408 & 1.013 & 0 \\
        $10^6$-A & 7.6 & 10& 3 & 9.947 & 12.614& 8.829 & 0 
        & 7.6 & 10& 3 & 11.135& 12.406& 10.250& 0 \\
        $10^6$-B & 7.3 & 9 & 6 & 17.595& 19.758& 16.027& 0 
        & 6.7 & 9 & 6 & 17.626& 21.727& 14.947& 2 \\
        $5\times10^6$-A & 21.1& 29& 11& 45.152& 51.017& 42.126& 0 
        & 0.0 & 0 & 0 & --- & --- & --- & 10 \\
        $5\times10^6$-B & 9.0 & 11& 6 & 80.895& 88.448& 74.711& 0 
        & 8.0 & 10& 6 & 81.996 & 86.845& 77.145& 8 \\
        $10^7$-A& 23.6& 39& 8 & 87.677& 90.426& 85.412& 0 
        & 0.0 & 0 & 0 & --- & --- & --- & 10 \\
        $10^7$-B& 10.0& 11& 8 & 160.990&170.754&148.771& 0 
        & 0.0 & 0 & 0 & --- & --- & --- & 10 \\
         \midrule
         & \multicolumn{5}{c}{Median Search} &&& \multicolumn{5}{c}{Secant} &\\
        $10^4$-A   & 14.3& 15& 14& 0.429 & 0.511 & 0.334 & 0 & 13.4& 18& 9 & 0.305 & 0.552 & 0.179 & 0 \\
        $10^4$-B   & 14.6& 15& 14& 0.550 & 1.291 & 0.365 & 0 & 16.5& 20& 15& 0.456 & 0.673 & 0.305 & 0 \\
        $10^5$-A  & 17.5& 18& 17& 3.136 & 3.965 & 2.638 & 0 & 16.4& 22& 13& 2.818 & 4.691 & 1.942 & 0 \\
        $10^5$-B  & 17.5& 18& 17& 3.473 & 4.822 & 2.981 & 0 & 22.0& 29& 16& 4.095 & 5.948 & 2.525 & 0 \\
        $10^6$-A & 21.0& 21& 21& 35.646& 37.225& 34.254& 0 & 26.0& 31& 24& 69.648& 88.189& 57.293& 0 \\
        $10^6$-B & 21.0& 21& 21& 40.409& 42.873& 37.891& 0 & 26.2& 31& 20& 73.012& 85.189& 50.023& 0 \\
        $5\times10^6$-A & 23.1& 24& 23& 162.209&174.564&155.661& 0 & 31.6& 38& 27& 397.853&506.582&350.901& 0 \\
        $5\times10^6$-B & 23.3& 24& 23& 173.998&208.180&160.876& 0 & 31.2& 42& 26 & 445.526&549.842&307.450& 1 \\
        $10^7$-A& 24.5& 25& 24& 314.219&326.259&299.808& 0 & 36.1& 46& 30& 901.595&1126.334&745.351& 0 \\
        $10^7$-B& 24.2& 25& 24& 352.367&369.425&328.137& 0 & 33.2& 46& 26& 888.466&1196.240&691.087& 0 \\
        \bottomrule
    \end{tabular}
\end{sidewaystable}

In this section, following~\cite{19}, we test DTA and NDTA on the sensor placement problem arising in traffic monitoring. Instances are generated by the public generator at \texttt{http://groups.di.unipi.it/optimize/Data/RDR.html}, which produces two classes of test problems, denoted Type-A and Type-B. We consider \(n=10^4\), \(10^5\), \(10^6\), \(5\times 10^6\), and \(10^7\). For each size, ten random instances are generated, and each instance is solved ten times to obtain stable timing estimates. Reported times are in milliseconds. For FPA2~\cite{19}, the stopping criterion is \(|\tau_k-\tau_{k-1}| \le tol\), where \(\tau_k\) is the threshold at the end of the $k$-th iteration and \(tol=10^{-12}\) as in~\cite{10}.

Table~\ref{tab:SP} reports running times, iteration counts, and numbers of failed runs for DTA, NDTA, WMVA~\cite{17}, FPA2~\cite{19}, Newton~\cite{10}, Variable Fixing~\cite{12}, Median Search~\cite{11}, and Secant~\cite{9}. A run is counted as failed if the iteration count reaches \(100\) or if
\[
\left|\frac{g(\tau)-c}{c}\right| \ge 10^{-8}.
\]
Reported averages are computed over successful runs only.

The results indicate that all algorithms are reliable on small and medium-scale instances (\(n\le 10^6\)), with only a few failures on \(10^6\)-B: one for FPA2 and two for Variable Fixing. In large-scale instances (\(n=5\times 10^6\) and \(n=10^7\)), their robustness diverges substantially. 
NDTA and DTA, both based on nondecreasing threshold updates, solved every test instance, as did Newton and Median Search. WMVA failed on a few instances and Secant on one, whereas FPA2 and Variable Fixing failed on all large type-A instances and many type-B instances.

From an efficiency perspective, NDTA and DTA are the most competitive, requiring only \(1\) to \(2\) iterations on average and less than \(20\)ms even in the largest instances. WMVA uses more iterations but maintains a low per-iteration cost, yielding a total running time comparable to Newton. Newton and Secant require more iterations overall, while Median Search has the largest iteration counts despite perfect robustness.

Overall, these results confirm that NDTA and DTA offer the best balance between efficiency and robustness, especially on large-scale instances. NDTA and DTA remain failure-free, whereas some competing methods suffer from either increased iteration counts or significant robustness issues.

\section{Conclusions and Future Work}\label{sec.6}

In this paper, we studied problem~\eqref{P} and two related applications, namely, the zero-lower-bound weighted minimum variance allocation problem~\eqref{ZWMVA} and the continuous relaxation of the sensor placement problem~\eqref{CSP}. We proposed the DTA, established its correctness, and analyzed its computational complexity. Our analysis also clarified the roles of the removal and reset operations. In particular, we derived a sufficient condition under which resets do not occur, which gives rise to the no-reset variant NDTA.  Constructed instances show that resetting can reduce the
running time from \(\Theta(n^2)\) to \(\Theta(n)\), although both
algorithms can attain the quadratic worst case. Under bounded weight ratios and bounded deletions per iteration, this worst-case removal pattern requires the smallest difference between distinct input values to decrease super-exponentially relative to their overall range, making it difficult to realize in fixed-precision arithmetic. Numerical experiments on instances with up to \(10^7\) variables show near-linear scaling for both DTA and NDTA. Their running times are
comparable, with NDTA occasionally being slightly faster. Together
with the constructed examples, these results show that the effect of
resetting depends on the input. It has little influence on typical
instances but can prevent a substantial slowdown on unfavorable ones. Overall, the proposed framework yields efficient and robust solvers for large-scale weighted continuous quadratic knapsack problems, with clear advantages over existing methods in running time.

Several directions merit further study, including extensions to more general weighted minimum variance allocation problems, broader classes of quadratic objectives, and models with multiple equality constraints.

\section*{Acknowledgments}
The research of Yong-Jin Liu was supported by the National Key Research and Development Program of China (2025YFA1016901), the National Natural Science Foundation of China (Grant No. 12271097), the Key Program of National Science Foundation of Fujian Province of China (Grant No. 2023J02007), the Central Guidance on Local Science and Technology Development Fund of Fujian Province (Grant No. 2023L3003). The research of Chuan Yang was supported by the National Natural Science Foundation of China (Grant No. 12526521).

\renewcommand\refname{References}

\end{document}